\DeclarePairedDelimiter{\abs}{\lvert}{\rvert}
\DeclarePairedDelimiter{\norm}{\lVert}{\rVert}
\DeclarePairedDelimiter{\set}{\{}{\}}
\DeclareMathAlphabet{\mathup}{OT1}{\familydefault}{m}{n}
\newcommand{\dx}[1]{\mathop{}\!\mathup{d} #1}
\DeclarePairedDelimiter{\prt}{(}{)}
\DeclarePairedDelimiter{\brk}{[}{]}
\newcommand{\N}{{\mathbb N}}
\newcommand{\R}{{\mathbb R}}
\theoremstyle{plain}
\newtheorem{theorem}{Theorem}[section]
\newtheorem{lemma}[theorem]{Lemma}
\newtheorem{corollary}[theorem]{Corollary}
\theoremstyle{remark}
\newtheorem{remark}[theorem]{\bf Remark}
\newcommand{\ie}{\emph{i.e.}}
\newcommand{\cf}{\emph{cf.}\;}
\renewcommand{\i}{^{(i)}}
\newcommand{\1}{^{(1)}}
\newcommand{\2}{^{(2)}}
\newcommand{\lip}{\mathrm{Lip}}
\newcommand{\sign}{\mathrm{sign}}
\newcommand{\ds}{\displaystyle}
\newcommand{\ddt}{\frac{\dx{}}{\dx{t}}}
\newcommand{\partialt}[1]{\frac{\partial #1}{\partial t}}
\newcommand{\partialx}[1]{\frac{\partial #1}{\partial x}}
\newcommand{\fpartial}[1]{\frac{\partial}{\partial #1}}
\newcommand{\grad}{\nabla}
\renewcommand{\div}{\nabla\cdot}
\newcommand{\Lap}{\Delta}
\begin{document}

\title[Incompressible limit for a viscous two-species tumour model  in 1D]{Incompressible limit for a two-species tumour  model with coupling through Brinkman's law in one dimension}

\author{Tomasz D\k{e}biec$^{1}$}
\author{Markus Schmidtchen$^{2}$}

\address{$^{1}$ Institute of Applied Mathematics and Mechanics, University of Warsaw, Banacha 2, 02-097 Warsaw, Poland (t.debiec@mimuw.edu.pl).}
\address{$^{2}$ Sorbonne Universit\'es, UPMC University of Paris, Paris 75005, France (markus.schmidtchen@upmc.fr).}

\maketitle
\begin{abstract}
    We present a two-species model with applications in tumour modelling. The main novelty is the coupling of both species through the so-called Brinkman law which is typically used in the context of visco-elastic media, where the velocity field is linked to the total population pressure via an elliptic equation. The same model for only one species has been studied by Perthame and Vauchelet in the past. The first part of this paper is dedicated to establishing existence of solutions to the problem, while the second part deals with the incompressible limit as the stiffness of the pressure law tends to infinity. Here we present a novel approach in one spatial dimension that differs from the kinetic reformulation used in the aforementioned study and, instead, relies on uniform BV-estimates.
\end{abstract}{}

%%%%%%%%%%%%%%%%%%%%%%%%%%%%%%%%%%%%%%%%%%%%%%%%%%%%%%%%%%%%%%%%%%%%%%%%%%%%%%%%%%%%%%%%%%
%%%%%%%%%%%%%%%%%%%%%%%%%%%%%%%%%%%%%%%%%%%%%%%%%%%%%%%%%%%%%%%%%%%%%%%%%%%%%%%%%%%%%%%%%%
%%%%%%%%%%%%%%%%%%%%%%%%%%%%%%%%%%%%%%%%%%%%%%%%%%%%%%%%%%%%%%%%%%%%%%%%%%%%%%%%%%%%%%%%%%
\section{Introduction}
In recent years there has been an increasing interest in multi-phase models applied to tumour growth. Traditionally, tumour growth was modelled using a single equation describing the evolution of the abnormal cell density. This paper is dedicated to studying the two-species model 
\begin{align*}
    \left\{
    \begin{array}{rl}
	\ds \partialt{n\i_k} - \div\prt*{n\i_k \grad W_k} &= \ds n\i_k G\i(p_k), \\[1em]
    \ds -\nu \Lap W_k +W_k &= \ds  p_k,
	\end{array}
	\right.
\end{align*}
where $n\i$ represents the normal (resp.\ abnormal) cells, for $i=1,2$, and $k\in \N$ is a given constant modelling the stiffness of the total population pressure, $p_k$, which is generated by both species, \ie,
\begin{align*}
	p_k := \frac{k}{k-1}\prt*{n_k\1 + n\2_k}^{k-1}.
\end{align*}
In addition, $\nu>0$ is a fixed positive constant that is understood as a measure of viscosity. The elliptic equation linking the macroscopic velocity, $W_k$, with the pressure $p_k$ is typically referred to as \emph{Brinkman's law}, for instance \cf \cite{All91}.
The growth of the two densities is assumed to be modulated by two functions $G\i$, for $i=1,2$, that are assumed to be decreasing in their variable, $p_k$, similar to \cite{BD09, RBEJPJ10}.

Throughout, we shall use the shorthand notation $n_k := n\1_k + n_k\2$, in order to denote the total population. Upon adding up the two equations for the individual species, we obtain an equation for the total population density, $n_k$, \ie,
\begin{align}
    \label{eq:totalpopulation}
	\partialt{n_k} - \div\prt{n_k \grad W_k} = n_k \prt*{r_k G\1(p_k) + (1-r_k) G\2(p_k)},
\end{align}
where $r_k$ is the population fraction $r_k:= n_k\1 / n_k$. Related models have been extensively studied in the past. We refer to \cite{PQTV14, PV15}, and references therein, for a treatise of the incompressible limit for a single-species visco-elastic tumour model. As above, the velocity field is given by an elliptic equation involving the pressure that, in their case, is just given by a power of the sole species. Introducing the coupling of the two equations for the individual species drastically changes the behaviour and the same tool employed in \cite{PV15} cannot be applied, at least not in a straightforward manner, and a different strategy has to be found. Even in the case $\nu = 0$ corresponding to the inviscid case, the system nature of the problem gives rise to a whole range of difficulties, \cf~\cite{CFSS17, GPS19, BPPS19}. At first glance, the pressure gains in regularity, however, it gains just enough regularity to obtain compactness of its gradient, requiring a minute derivation of suitable estimates. Let us stress that the same type of difficulties are also encountered when the pressure is not given as a power law, \cf~\cite{HV17, DHV18, CDHV18}. A key tool in obtaining existence results and stable (with respect to the parameter $k$) estimates is to devise and manipulate the equation satisfied by the (joint) population pressure, \cf~\cite{CFSS17, GPS19, BPPS19, PV15, MPQ17, PQV14, HV17, DHV18, CDHV18}. In this work we shall follow this path. An easy application of the chain rule in conjunction with Eq.~\eqref{eq:totalpopulation} leads to
\begin{align*}
	\partialt{p_k} - \grad p_k \cdot \grad W_k =\frac{k-1}{\nu}p_k \brk*{W_k - p_k + \nu r_k G\1(p_k) + \nu (1-r_k)G\2(p_k)},
\end{align*}
where the population fraction $r_k$ satisfies
\begin{align*}
	\partialt {r_k} -\grad r_k \cdot \grad W_k = r_k (1-r_k)\brk*{G\1(p_k) - G\2(p_k)}.
\end{align*}
The change to these new variables was first introduced in \cite{BGH87, BGHP85, BGH87a} in the context of a two-species system where the two species avoid overcrowding. In a way, their works paved the way for more modern approaches to tumour models linked through Darcy's law, \cf \cite{BHIM12, GPS19, CFSS17, BPPS19}.

\medskip

The rest of this paper is organised as follows. In the subsequent section we set up precisely the problem and state our assumptions. In Section \ref{sec:Existence} we establish existence of solutions to the main system under consideration, Eq.~\eqref{eq:system}, and discuss their regularity necessary for our purposes. Section \ref{sec:Apriori} is dedicated to establishing a range of a priori estimates necessary in the analysis of the incompressible limit. Section \ref{sec:StrongCompactnessOfPressure} is devoted to establishing the strong compactness of the pressure, which is  key in passing to the stiff limit. Finally, with all information at hand, we pass to the incompressible limit in the pressure equation and derive the so-called \emph{complementarity relation} in Section \ref{sec:Incompressible}. We round off the analytical results in Section \ref{sec:Numerics} by presenting some numerical simulations for different parameter choices.

%%%%%%%%%%%%%%%%%%%%%%%%%%%%%%%%%%%%%%%%%%%%%%%%%%%%%%%%%%%%%%%%%%%%%%%%%%%%%%%%%%%%%%%%%%
%%%%%%%%%%%%%%%%%%%%%%%%%%%%%%%%%%%%%%%%%%%%%%%%%%%%%%%%%%%%%%%%%%%%%%%%%%%%%%%%%%%%%%%%%%
%%%%%%%%%%%%%%%%%%%%%%%%%%%%%%%%%%%%%%%%%%%%%%%%%%%%%%%%%%%%%%%%%%%%%%%%%%%%%%%%%%%%%%%%%%
\section{Preliminaries and Statement of the Main Results}
We study the system
\begin{subequations}
\label{eq:system}
\begin{align}
    \left\{
    \begin{array}{rl}
	\ds \partialt{n\1_k} - \fpartial x \prt*{n\1_k \partialx {W_k}} &= \ds n\1_k G\1(p_k), \\[1em]
	\ds \partialt{n\2_k} - \fpartial x \prt*{n\2_k \partialx {W_k}} &= \ds n\2_k G\2(p_k),
	\end{array}
	\right.
\end{align}
posed on the whole domain $\R$. It is coupled through the Brinkman law
\begin{align}
    \label{eq:brinkman}
    \ds -\nu \frac{\partial^2}{\partial x^2} W_k +W_k &= \ds  p_k.
\end{align}
\end{subequations}
The system is equipped with  non-negative initial data 
\begin{subequations}
\label{eq:InitialData}
\begin{align}
    n\i_{0,k} \in L^1(\R) \cap L^\infty(\R),
\end{align}
for any integer $k\geq2$.  Moreover, we assume that there exists a constant, $C>0$, such that
\begin{align}
    \int_\R\; \abs*{\partialx {n\i_{0,k}}} \dx{x} \leq C,
\end{align}
for $i=1,2$, and every $k\geq2$.
\end{subequations}
As before, the pressure is given in form of a power of the joint population, \ie,
\begin{align}
    p_k := \frac{k}{k-1} \prt*{n_k\1 + n_k\2}^{k-1} =\frac{k}{k-1} n_k^{k-1}.
\end{align}
Recall that the pressure satisfies 
\begin{align}
    \label{eq:PressureEqn}
    \partialt{p_k} - \partialx {p_k}  \partialx {W_k} =\frac{k-1}{\nu}p_k \brk*{W_k - p_k + \nu r_k G\1(p_k) + \nu (1-r_k)G\2(p_k)},
\end{align}
with the population fraction, $r_k := n\1_k / n_k$, given by 
\begin{align}
    \label{eq:RatioEqn}
    \partialt {r_k} -\partialx {r_k}  \partialx{W_k} = r_k (1-r_k)\brk*{G\1(p_k) - G\2(p_k)}.    
\end{align}{}

Throughout the paper we assume the following regularity and properties of the growth functions $G\i$, $i=1,2$,
\begin{align}
    \label{eq:AssumptionsOnG}
    G\i \in C^1(\R),\quad G\i_p \leq -\alpha < 0,\quad G\i(p_M) = 0,
\end{align}
for some  $p_M > 0$, where $G\i_p$ denotes the derivative of the function $G\i$. The pressure $p_M$ is often called the \emph{homeostatic pressure}.

Recall that a solution $W_k$ to  Brinkman's equation $-\nu\partial^2_x W_k + W_k = p_k$ can be written as $W_k = K\star p_k$, where $K$ is the fundamental solution to the equation $-\nu{\partial_x^2 K} + K = \delta_0$, \ie,
\begin{equation}
    \label{eq:SourceSolution}
    K(x) = \frac{1}{4\pi}\int_0^\infty\exp{[-(\pi|x|^2\slash {4s\nu} + s\slash{4\pi})]}s^{-1/2}\dx{s} = \frac{1}{2\sqrt{\nu}} \exp\prt*{-\nu^{-1/2}|x|}.
\end{equation}
Then $K\geq 0$,\; $\int K(x)\dx{x} = 1$ and $K, \partial_x{K} \in L^q(\R)$ for $1\leq q \leq \infty$.
By the elliptic regularity theory we have $W_k(t,\cdot)\in W^{2,q}(\R)$, for any $t\in[0,T]$, $1\leq q\leq\infty$.

\bigskip

Below we formulate the main results of this work.

\begin{theorem}[Existence of Solutions]
    \label{thm:Existence}
    For any initial data satisfying \eqref{eq:InitialData}, system \eqref{eq:system} admits a solution $n_k\1, n_k\2 \in L^\infty(0,T; BV(\R) \cap L^\infty(\R))$.
\end{theorem}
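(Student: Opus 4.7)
The plan is to establish existence for $k$ fixed via a Schauder fixed-point argument, exploiting the fact that the Brinkman law \eqref{eq:brinkman} regularises the pressure into a smooth velocity field. Since in one dimension $K,\partial_x K\in L^1\cap L^\infty$ (cf.\ \eqref{eq:SourceSolution}), the map $p\mapsto W=K\star p$ sends $L^\infty$ into $W^{2,\infty}$, so the continuity equations in \eqref{eq:system} can be treated as linear transport--reaction equations driven by a smooth velocity, coupled only through the nonlinear source $G\i(p_k)$ and the reconstruction of $p_k$ from $n_k\1+n_k\2$.

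Concretely, I fix $T>0$ to be chosen small and work in a closed convex subset $\mathcal{K}\subset C([0,T];L^1(\R)\cap L^\infty(\R))^2$ consisting of non-negative pairs $(\bar n\1,\bar n\2)$ satisfying uniform bounds $\|\bar n\i\|_{L^\infty_{t,x}}\leq M_\infty$, $\|\bar n\i(t)\|_{L^1_x}\leq M_1$, and $\|\partial_x\bar n\i(t)\|_{L^1_x}\leq M_{BV}$, the constants to be fixed below. Given such a pair I set $\bar p=\tfrac{k}{k-1}(\bar n\1+\bar n\2)^{k-1}$, define $\bar W=K\star\bar p$, and solve the \emph{linear} transport--reaction problems
\[
\partial_t n\i-\partial_x\bigl(n\i\,\partial_x\bar W\bigr)=n\i\,G\i(\bar p),\qquad n\i|_{t=0}=n\i_{0,k},\qquad i=1,2,
\]
by the method of characteristics. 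Non-negativity of $n\i$ is automatic, and the map $\Phi(\bar n\1,\bar n\2):=(n\1,n\2)$ is well-defined.

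To verify $\Phi(\mathcal{K})\subset\mathcal{K}$ I use the standard estimates for linear transport--reaction. A Duhamel formula on the flow yields $\|n\i(t)\|_\infty\leq\|n\i_{0,k}\|_\infty\exp(t(\|\partial_x^2\bar W\|_\infty+\|G\i\|_\infty))$, direct integration gives $\|n\i(t)\|_1\leq\|n\i_{0,k}\|_1\, e^{t\|G\i\|_\infty}$, and differentiating the equation in $x$, testing against $\sign(\partial_x n\i)$ and using the identity $\int\sign(u_x)\,u_{xx}\,\bar W_x\,dx=-\int|u_x|\,\partial_x^2\bar W\,dx$ leads to
\[
\ddt\|\partial_x n\i\|_{L^1}\leq C\bigl(\|\partial_x n\i\|_{L^1}+\|n\i\|_\infty\|\partial_x^3\bar W\|_{L^1}+\|n\i\|_\infty\|\partial_x\bar p\|_{L^1}\bigr),
\]
where $\|\partial_x^3\bar W\|_{L^1}$ is controlled through \eqref{eq:brinkman} by $\|\partial_x\bar p\|_{L^1}$, and $\|\partial_x\bar p\|_{L^1}\leq k\|\bar n\1+\bar n\2\|_\infty^{k-2}\|\partial_x(\bar n\1+\bar n\2)\|_{L^1}$. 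Choosing $T=T(k)$ small and $M_\infty,M_1,M_{BV}$ comfortably larger than the initial data bounds, Gronwall closes every estimate and confines $\Phi$ to $\mathcal{K}$. Continuity of $\Phi$ in the $C([0,T];L^1_{\mathrm{loc}})$ topology is routine since $\bar n\mapsto\bar p\mapsto\bar W\mapsto n\i$ is continuous on bounded sets (dominated convergence for the pointwise power, and $L^q$ regularity of $K,\partial_x K$ for the convolution), while compactness of $\Phi(\mathcal{K})$ follows from the uniform BV bound in space together with a uniform $W^{-1,1}$ bound on $\partial_t n\i$ read off directly from the PDE, via an Aubin--Lions argument. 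Schauder's theorem then delivers a fixed point, i.e.\ a solution on $[0,T]$; since the a priori bounds grow at most exponentially, a standard continuation argument extends the solution to any prescribed $[0,T_{\max}]$.

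The main obstacle is closing the BV estimate: the $\|\partial_x^3\bar W\|_{L^1}$ term produced by differentiating the transport is controlled through Brinkman by $\|\partial_x\bar p\|_{L^1}\lesssim k\|\bar n\|_\infty^{k-2}\|\partial_x\bar n\|_{L^1}$, which is finite for each fixed $k$ but blows up as $k\to\infty$. This is harmless for Theorem \ref{thm:Existence}, which asserts existence only for each individual $k$, but it already signals that the uniform-in-$k$ estimates required in the subsequent sections for the incompressible limit must bypass this route and instead be extracted directly from the pressure equation \eqref{eq:PressureEqn}.
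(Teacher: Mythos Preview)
Your argument is essentially correct, but it takes a genuinely different route from the paper's proof.

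The paper passes to the $(p_k,r_k)$ variables and runs a \emph{Banach} fixed-point argument in $L^\infty(0,T;L^\infty(\R))$: for frozen $(p,r)$ one sets $W=K\star p$ and solves the linearised transport equations for $(u\1,u\2)$ along the backward characteristic flow of $-\partial_x W$. Assuming first that the initial data are Lipschitz, a flow-stability estimate gives $\|u\i-\tilde u\i\|_{L^\infty}\leq C T\|p-\tilde p\|_{L^\infty}$, hence a contraction for short time; the BV regularity is derived \emph{a posteriori} by differentiating the fixed-point equations, and the Lipschitz hypothesis on the data is then removed by the resulting BV stability. Finally the result is transferred back to $(n\1_k,n\2_k)$ via $n\1=\bigl(\tfrac{k-1}{k}u\1_\ast\bigr)^{1/(k-1)}u\2_\ast$, $n\2=\bigl(\tfrac{k-1}{k}u\1_\ast\bigr)^{1/(k-1)}(1-u\2_\ast)$.

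You instead stay in the $(n\1,n\2)$ variables and use a \emph{Schauder} argument, closing the map on a BV-bounded convex set and obtaining compactness through the space-BV estimate combined with an Aubin--Lions time bound. The comparison is the expected trade-off: the paper's contraction delivers uniqueness of the fixed point for free and needs only the $L^\infty$ framework, at the price of an intermediate Lipschitz-data assumption and a change of variables; your compactness route works directly in BV and in the original unknowns, but requires more functional-analytic machinery and yields existence only. Your closing remark---that the constant in the BV estimate depends badly on $k$ through $k\|\bar n\|_\infty^{k-2}$ and hence cannot serve for the incompressible limit---is exactly the point; in Section~\ref{sec:StrongCompactnessOfPressure} the paper obtains a \emph{uniform} BV bound by a different mechanism (summing the equations for $\partial_x n\1_k$, $\partial_x n\2_k$ and $\partial_x n_k$ so that the dangerous $\partial_x p_k$ terms cancel). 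One small caveat: when you invoke $\|G\i\|_\infty$ remember that $G\i$ is unbounded below on $\R$; the bound is legitimate only because $\bar p$ is confined to a fixed compact interval by the $L^\infty$ control on $\bar n$, which you should make explicit.
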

We highlight the fact that solutions are essentially bounded since these bounds are not a consequence of the BV-bounds. Rather, they are obtained independently. This may prove useful for an extension to higher dimensions in future works.

\begin{theorem}[Incompressible Limit and Complementarity Relation]
    \label{thm:IncompressibleLimit}
    We may pass to the limit $k\to \infty$ in the pressure equation, Eq. \eqref{eq:PressureEqn}. This yields the so-called complementarity relation
    \begin{align}
        0 = p_\infty \prt*{W_\infty - p_\infty + \nu n\1_\infty G\1(p_\infty) + \nu n\2_\infty G\2(p_\infty)},
    \end{align}
    in the distributional sense, where $n\i_\infty$, $i=1,2$, satisfies
\begin{align*}
    \left\{
    \begin{array}{rl}
    \ds \partialt{n\i_\infty} - \partialx{}{\prt*{n\i_\infty\partialx{W_\infty}}} &=\ds n\i_\infty G\i(p_\infty),\\[1em]
    \ds -\nu \frac{\partial^2 W_\infty}{\partial x^2} + W_\infty &=\ds p_\infty.
    \end{array}
    \right.
\end{align*}
    Moreover, the following holds true
    \begin{align*}
        p_\infty(n_\infty - 1) = 0.
    \end{align*}
\end{theorem}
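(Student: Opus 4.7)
The plan is to start from the uniform a priori estimates of Section \ref{sec:Apriori} together with the strong compactness of the pressure established in Section \ref{sec:StrongCompactnessOfPressure}, and extract a subsequence along which every term of \eqref{eq:system} and \eqref{eq:PressureEqn} can be passed to the limit. The uniform BV and $L^\infty$ bounds on $n_k\i$ yield strong $L^1_{\mathrm{loc}}$ compactness (by e.g.\ Aubin--Lions, using the evolution equations to control the time regularity), Section \ref{sec:StrongCompactnessOfPressure} provides strong convergence of $p_k \to p_\infty$, and since $W_k = K\star p_k$ with $K, \partial_x K \in L^q(\R)$, strong convergence of $p_k$ transfers to strong convergence (and, up to a further subsequence, pointwise a.e.) of $W_k \to W_\infty$ and $\partial_x W_k \to \partial_x W_\infty$.

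With these convergences in hand, passing to the limit in the two species equations of \eqref{eq:system} is routine in the distributional sense: the flux $n_k\i \partial_x W_k$ is the product of a strongly convergent bounded sequence with a strongly convergent one, the source $n_k\i G\i(p_k)$ converges by continuity of $G\i$ and strong convergence of $p_k$, and the elliptic equation \eqref{eq:brinkman} is preserved by weak-$*$ convergence of $p_k$. This yields the limit system for $(n_\infty\i, W_\infty, p_\infty)$.

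The heart of the proof is the limit in \eqref{eq:PressureEqn}. I would rewrite it as
\begin{equation*}
    p_k \brk*{W_k - p_k + \nu r_k G\1(p_k) + \nu (1-r_k) G\2(p_k)} = \frac{\nu}{k-1}\brk*{\partial_t p_k - \partial_x p_k\,\partial_x W_k}.
\end{equation*}
To avoid passing to the limit in $\partial_x p_k$ directly, I would use the identity $\partial_x p_k\,\partial_x W_k = \partial_x(p_k \partial_x W_k) - p_k\,\partial_x^2 W_k$ and substitute $\partial_x^2 W_k = \nu^{-1}(W_k - p_k)$ from Brinkman's law; both terms are then products of strongly convergent, uniformly bounded sequences, hence bounded in $\mathcal{D}'$ uniformly in $k$. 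Dividing by $k-1$, the right-hand side vanishes as $k\to\infty$, while on the left the identity $p_k r_k = p_k\, n_k\1 / n_k$ combined with the fact that $n_\infty = 1$ on $\{p_\infty > 0\}$ (see below) allows one to identify $p_k r_k \to p_\infty n_\infty\1$ and analogously for the $(1-r_k)$ term, giving
\begin{equation*}
    0 = p_\infty \brk*{W_\infty - p_\infty + \nu n_\infty\1 G\1(p_\infty) + \nu n_\infty\2 G\2(p_\infty)}
\end{equation*}
in $\mathcal{D}'$.

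The complementarity $p_\infty(n_\infty - 1) = 0$ follows from $p_k = \tfrac{k}{k-1} n_k^{k-1}$. The uniform bound $p_k \leq p_M$ (stemming from \eqref{eq:AssumptionsOnG} and the a priori analysis of Section \ref{sec:Apriori}) gives $n_k \leq (\tfrac{k-1}{k} p_M)^{1/(k-1)} \to 1$, so $n_\infty \leq 1$ a.e.; conversely, at any Lebesgue point where $n_\infty < 1$, strong convergence of $n_k$ forces $n_k^{k-1} \to 0$ and therefore $p_\infty = 0$, so that $p_\infty(n_\infty - 1)$ vanishes almost everywhere. The main technical obstacle is justifying the limit in the quadratic product $\partial_x p_k\,\partial_x W_k$, which is exactly what the Brinkman-based rewriting handles, crucially exploiting the one-dimensional setting and the elliptic structure of \eqref{eq:brinkman}.
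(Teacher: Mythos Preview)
Your proposal is correct and follows essentially the same route as the paper: both rewrite the advection term $\partial_x p_k\,\partial_x W_k$ as $\partial_x(p_k\,\partial_x W_k)-p_k\,\partial_x^2 W_k$ and substitute Brinkman's law so that the weak form of the left-hand side of \eqref{eq:PressureEqn} is uniformly bounded, then divide by $k-1$ to annihilate it; the strong compactness of $p_k$ from Section~\ref{sec:StrongCompactnessOfPressure} handles the nonlinear terms on the right.

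The only noteworthy difference is in economy. For the relation $p_\infty(n_\infty-1)=0$ and the identification $p_k r_k\to p_\infty n_\infty\1$, the paper avoids invoking strong compactness of the densities altogether: it uses the algebraic identity $n_k p_k=\bigl(\tfrac{k-1}{k}\bigr)^{1/(k-1)}p_k^{k/(k-1)}$ (and similarly $p_k r_k=c_k\,p_k^{(k-2)/(k-1)}n_k\1$), so that strong convergence of $p_k$ together with mere \emph{weak} convergence of $n_k,\,n_k\i$ suffices. Your argument instead appeals to strong $L^1_{\mathrm{loc}}$ compactness of $n_k\i$ via Aubin--Lions and the BV bounds; this is valid (the paper acknowledges it in the remark following the proof) but is extra work that the algebraic route sidesteps. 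Either way the conclusion is the same.
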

The subsequent sections are concerned with the proof of the two main theorems.

%%%%%%%%%%%%%%%%%%%%%%%%%%%%%%%%%%%%%%%%%%%%%%%%%%%%%%%%%%%%%%%%%%%%%%%%%%%%%%%%%%%%%%%%%%
%%%%%%%%%%%%%%%%%%%%%%%%%%%%%%%%%%%%%%%%%%%%%%%%%%%%%%%%%%%%%%%%%%%%%%%%%%%%%%%%%%%%%%%%%%
%%%%%%%%%%%%%%%%%%%%%%%%%%%%%%%%%%%%%%%%%%%%%%%%%%%%%%%%%%%%%%%%%%%%%%%%%%%%%%%%%%%%%%%%%%
\section{Existence of Solutions and Regularity}
\label{sec:Existence}
This section is dedicated to proving the existence of solutions to the $(p,r)$-system. The proof is based on an application of Banach's fixed point theorem. Let $k\geq2$ be fixed throughout this section. Further, assume for now, that the initial data $u_0\i$ are Lipschitz continuous. For given functions $p,r \in L^\infty(0,T; L^\infty(\R))$ we construct solutions $u\i$ to the linearised system, $i=1,2$,
\begin{align}
    \label{eq:Existence_LinearisedSystem}
	\ds \partialt u\i - \partialx { u\i}  \partialx W =\ds \frac{k-1}{\nu} f\i (p,r),
\end{align}
where 
\begin{align*}
	f^1(p,r) = K\star p - p + \nu r G\1(p) + \nu (1-r)G\2(p),
\end{align*}
and
\begin{align*}
	f^2(p,r) = r(1-r) \brk{G\1(p) - G\2(p)}.
\end{align*}

For the fixed $p$ from above, we may construct the backward flow
\begin{align*}
	\left\{
	\begin{array}{rl}
		\ds\frac{\dx{}{X_{(x,t)}}}{\dx{s}}(s) &= -\ds  \partialx {W} (X_{(x,t)},s),\\[1em]
		X_{(x,t)}(t) &= x.
	\end{array}
	\right.
\end{align*}
We readily observe that 
\begin{align*}
	u\i(t,x) = u_0\i(X_{(x,t)}(s=0)) + \int_0^t f\i(p(\tau,x), r(\tau,x))\dx{\tau},
\end{align*}
$i=1,2$, solve the linearised system~\eqref{eq:Existence_LinearisedSystem}. Now, considering another element $(\tilde p,\tilde r)$ in $L^\infty(0,T; L^\infty(\R))$, we observe that
\begin{align*}
	\abs*{u\i(t,x) - \tilde u\i(t,x)} 
	&= \abs*{u_0\i(X_{(x,t)}(s=0)) - u_0\i(\tilde X_{(x,t)}(s=0))}\\
	&\leq \lip(u_0\i)\abs*{X_{(x,t)}(s=0) - \tilde X_{(x,t)}(s=0)}\\
	&\leq \lip(u_0\i)\int_t^0\abs*{\partialx  W(X_{(x,t)}(s),s)  - \partialx {\tilde W}(\tilde X_{(x,t)}(s),s)}\dx{s}\\
	&\leq \lip(u_0\i)\int_t^0\abs*{\partialx  K \star(p-\tilde p)}\dx{s}\\
	&\leq \lip(u_0\i)\int_t^0\norm*{\partialx K}_{L^1} \norm{p-\tilde p}_{L^\infty} \dx{s}\\
	&\leq \lip(u_0\i) T \norm*{\partialx  K}_{L^1} \norm{p-\tilde p}_{L^\infty}.
\end{align*}
Thus, upon passing to the supremum, we obtain the following stability estimate for two solutions
\begin{align}
   \norm*{u\i - \tilde u\i}_{L^\infty} \leq C T \norm{p-\tilde p}_{L^\infty}.
\end{align}
In particular, for $T_1>0$ small enough the estimate gives rise to a contraction in the Banach space $L^\infty(0,T_1; L^\infty(\R))$, which is sufficient to infer the existence of a unique fixed point, by an application of Banach's fixed point theorem. Since the supremum norm of the solution does not blow up, a finite number of iterations of the above argument leads to existence of solutions for all times $T>0$.\\

For the subsequent analysis, let us call this fixed point $(u\1_\ast, u\2_\ast)$. It remains to prove the expected BV-regularity of solutions. This is an easy consequence of the ``transport nature'' of the system, \ie, 
\begin{align}
    \fpartial t \partialx {u\i_\ast} &= \partialx {u\i_\ast} \partialx {W} + \frac{k-1}{\nu} \brk*{ f\i_p(u\1_\ast, u\2_\ast) \partialx {u\1_\ast} +f\i_r(u\1_\ast, u\2_\ast) \partialx {u\2_\ast} }.
\end{align}
Multiplying by $\sign(u\i_\ast)$ and adding the two equations, for $i=1,2$, we obtain, after integrating
\begin{align}
    \ddt \int_\R \abs*{\partialx {u\1_\ast} } + \abs*{\partialx {u\2_\ast}} \dx{x} \leq C \int_\R \abs*{\partialx {u\1_\ast} } + \abs*{\partialx {u\2_\ast}} \dx{x},
\end{align}
where the constant $C>0$ depends only on the Lipschitz constants of the functions $f\i$ and the $L^\infty$-bounds on the fixed point. In particular, from Gronwall's inequality we deduce a control on the BV-seminorm and, more importantly, the existence of solutions even in cases where $u\i_0$ is not Lipschitz continuous but only of bounded variation.

Using the fact that
\begin{equation*}
    n\1 = \prt*{\frac{k-1}{k}u\1_*}^{\frac1{k-1}}u_*\2,\quad \text{and } \quad n\2 = \prt*{\frac{k-1}{k}u\1_*}^{\frac1{k-1}}\prt*{1-u_*\2},
\end{equation*}
the existence result transfers to the original system for $n\i_k$, $i=1,2$.

\begin{remark}[Extension to Higher Dimensions]
Let us remark here that the same strategy can be easily extended to higher dimensions since the transport nature is the same in any dimension. In fact, the only ``problematic'' point in our strategy is the contraction argument which depends on $\norm{\partial_x K}_{L^1}$. However, this norm is finite in any dimension, and therefore our existence result holds in any dimension.
\end{remark}

%%%%%%%%%%%%%%%%%%%%%%%%%%%%%%%%%%%%%%%%%%%%%%%%%%%%%%%%%%%%%%%%%%%%%%%%%%%%%%%%%%%%%%%%%%
%%%%%%%%%%%%%%%%%%%%%%%%%%%%%%%%%%%%%%%%%%%%%%%%%%%%%%%%%%%%%%%%%%%%%%%%%%%%%%%%%%%%%%%%%%
%%%%%%%%%%%%%%%%%%%%%%%%%%%%%%%%%%%%%%%%%%%%%%%%%%%%%%%%%%%%%%%%%%%%%%%%%%%%%%%%%%%%%%%%%%
\section{A Priori Estimates}
\label{sec:Apriori}
In this section we derive some bounds for the main quantities of interests, uniformly in $k$. These will be vital when passing to the limit with $k\to\infty$.

\begin{lemma}[A priori estimates I]
    \label{lemma:apriori}
	The following hold uniformly in $k$ for any $T>0$.
	\begin{enumerate}[(i)]
		\item $n_k \in L^{\infty} (0,T; L^1(\R))$,
		\item $p_k \in L^{\infty} (0,T; L^{\infty}(\R))$,
		\item $n_k \in L^{\infty} (0,T; L^{\infty}(\R))$,
		\item $p_k \in L^{\infty} (0,T; L^{1}(\R))$, and 
		\item $n_k\i \in L^{\infty} (0,T; L^{\infty}(\R))$, for $i=1,2$.
	\end{enumerate}
\end{lemma}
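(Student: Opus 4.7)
My plan is to establish the five bounds in the order dictated by logical dependence rather than the listed order, treating the uniform $L^\infty$-bound on $p_k$ in (ii) as the crux from which (iii), (iv), and (v) cascade, while (i) can be handled independently and first.

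For (i) I would integrate the summed equation \eqref{eq:totalpopulation} over $\R$. The transport term vanishes after integration by parts on the regular solutions constructed in Section~\ref{sec:Existence}, and the source is bounded above by $\max_i G\i(0)\int n_k\dx x$ using monotonicity of $G\i$ together with $p_k\geq 0$. A direct Gronwall inequality then yields a $k$-independent $L^1$-bound.

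The crux is (ii): I claim $\norm{p_k(t,\cdot)}_{L^\infty}\leq\max\prt*{\norm{p_{0,k}}_{L^\infty},p_M}$, giving control by the homeostatic pressure uniformly in $k$. The structural observation enabling this is that the Brinkman kernel $K$ in \eqref{eq:SourceSolution} is non-negative with unit mass, so $W_k=K\star p_k\leq\norm{p_k(t,\cdot)}_{L^\infty}$ pointwise. Consequently, wherever $p_k$ attains its spatial supremum, the dissipative term $W_k-p_k\leq 0$; and wherever $p_k\geq p_M$, the source $r_k G\1(p_k)+(1-r_k)G\2(p_k)\leq 0$ by \eqref{eq:AssumptionsOnG}. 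Thus the constant $p_M$ is formally a super-solution of the pressure equation \eqref{eq:PressureEqn}. I would make this rigorous by testing \eqref{eq:PressureEqn} against $(p_k-p_M)_+^{q-1}$, integrating the transport term by parts and using \eqref{eq:brinkman} to substitute $\partial_x^2 W_k=(W_k-p_k)/\nu$, verifying that the resulting integrand is non-positive on $\set{p_k>p_M}$, and finally sending $q\to\infty$.

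With (ii) in hand the remaining bounds follow quickly. Estimate (iii) is algebraic via $n_k=\prt{\tfrac{k-1}{k}p_k}^{1/(k-1)}\leq\prt{\tfrac{k-1}{k}p_M}^{1/(k-1)}$, which is bounded uniformly in $k$ since it converges to $1$. For (iv), factoring $p_k=\tfrac{k}{k-1}n_k^{k-2}\, n_k$ and invoking (iii) to bound the prefactor uniformly in $k$ yields $\norm{p_k}_{L^1}\leq C\norm{n_k}_{L^1}$, and (i) concludes. For (v), I would pass to characteristics $\dot X=-\partialx{W_k}(X,t)$; using Brinkman's law each species equation rewrites along trajectories as
\begin{equation*}
    \ddt n_k\i(X(t),t) = n_k\i\brk*{\frac{W_k-p_k}{\nu}+G\i(p_k)},
\end{equation*}
and the bracket is bounded uniformly in $k$ by (ii), so Gronwall along trajectories furnishes the desired $L^\infty$-bound. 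The main obstacle is the maximum principle in (ii): the supremum on $\R$ need not be attained, and $\partialx{W_k}$ is itself non-local in $p_k$, so one must carefully verify that the cross-term arising from integrating the transport by parts retains its favourable sign robustly in $k$, lest the bound degrade in the stiff limit.
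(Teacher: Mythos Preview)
Your overall strategy and items (i), (iii), (iv) align with the paper's proof. Two points merit comment.

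For (ii) there is a genuine gap. Your structural observation that $W_k=K\star p_k\leq\norm{p_k(t,\cdot)}_\infty$ is the right key, but the proposed implementation---testing \eqref{eq:PressureEqn} against $(p_k-p_M)_+^{q-1}$ and claiming the resulting integrand is non-positive on $\set{p_k>p_M}$---does not work as stated. After integrating the transport term by parts and substituting $\partial_x^2 W_k=(W_k-p_k)/\nu$, the contribution involving $W_k-p_k$ reads
\[
\int_\R (W_k-p_k)\,(p_k-p_M)_+^{q-1}\Bigl[\tfrac{k-1}{\nu}p_k-\tfrac1{q\nu}(p_k-p_M)\Bigr]\dx x,
\]
and the bracket is positive for large $k$, while $W_k-p_k$ has no sign on $\set{p_k>p_M}$: the bound $W_k\leq\norm{p_k}_\infty$ only forces $W_k-p_k\leq0$ at the spatial maximum of $p_k$, not throughout the superlevel set. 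The concern you flag at the end is therefore not a technicality but the actual failure point of this route. The maximum principle the paper invokes is most cleanly obtained via characteristics (the very device you use in (v)): along $\dot X=-\partial_x W_k$ one has $\tfrac{d}{dt}p_k(X(t),t)=\tfrac{k-1}{\nu}p_k\,Q_k$, and near a spatial maximum with $p_k>p_M$ both $W_k-p_k\leq0$ and the growth terms are non-positive, so the supremum cannot climb above $\max(\norm{p_{0,k}}_\infty,p_M)$.

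For (v) your characteristics argument is correct, but the paper takes a shorter route: from \eqref{eq:RatioEqn} one has $0\leq r_k\leq1$, whence $n_k\i\leq n_k$ and (iii) immediately gives the $L^\infty$-bound. This avoids any ODE estimate along trajectories.
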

\begin{proof}
Clearly when $n_k(t=0)\geq0$, then $n_k$ stays non-negative at all times. Integrating Eq.~\eqref{eq:totalpopulation} in space and time we deduce that $n_k\in L^{\infty} (0,T; L^1(\R))$ uniformly in $k$. By the maximum principle we have the bound $0\leq p_k \leq p_M$. Then using $n_k \simeq p^{\frac1{k-1}}$ we deduce $n_k \in L^{\infty} (0,T; L^{\infty}(\R))$ uniformly. Writing $p_k \leq n_k\norm{n_k}_\infty^{k-2}$ we see that $p_k\in L^{\infty} (0,T; L^{1}(\R))$. Finally, we use that $n\1_k = r_k n_k$ and $0\leq r_k\leq1$ to deduce the last bounds.
\end{proof}

Using the above Lemma and the boundedness of $W_k$, we have the following result.
\begin{lemma}[Integrability and Segregation]
    \label{lem:IntegrabilityAndSegregation}
	If both species are segregated initially, \ie, 
	\begin{align*}
		\int_\R r_k^0(1-r_k^0) \dx{x} = 0,
	\end{align*}
	then there holds
	\begin{align*}
		\int_\R r_k(t,x)\,\prt*{1-r_k(t,x)} \dx{x} = 0,
	\end{align*}
	for all times $0\leq t \leq T$. 
    In particular, $r_k^0(1-r_k^0) \in L^1(\R)$ implies $r_k(1-r_k) \in L^\infty(0,T; L^1(\R))$.
\end{lemma}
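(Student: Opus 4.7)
The plan is to derive a transport equation with linear source for the product $q_k := r_k(1-r_k)$ and then close a Gronwall-type estimate on $\int q_k\,dx$ using the uniform bounds from Lemma \ref{lemma:apriori} together with Brinkman's law.

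First, I would apply the chain rule to Eq.~\eqref{eq:RatioEqn} and compute
\begin{align*}
    \partial_t q_k - \partial_x q_k\,\partial_x W_k \;=\; (1-2r_k)\bigl[\partial_t r_k - \partial_x r_k\,\partial_x W_k\bigr] \;=\; (1-2 r_k)\, q_k\,\bigl[G\1(p_k) - G\2(p_k)\bigr].
\end{align*}
This is a linear transport equation for $q_k$ along the velocity field $-\partial_x W_k$, with a bounded multiplicative source.

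Next I would integrate over $\R$ and integrate by parts in the advective term, using Brinkman's law \eqref{eq:brinkman} to rewrite $\partial_x^2 W_k = (W_k - p_k)/\nu$:
\begin{align*}
    \frac{\dx{}}{\dx{t}}\int_\R q_k\dx{x} \;=\; -\frac{1}{\nu}\int_\R q_k\,(W_k - p_k)\dx{x} + \int_\R (1-2r_k)\,q_k\,\bigl[G\1(p_k) - G\2(p_k)\bigr]\dx{x}.
\end{align*}
By Lemma \ref{lemma:apriori} the pressure $p_k$ is uniformly bounded by $p_M$, so $W_k = K\star p_k$ is bounded by $p_M$ as well (since $K\geq 0$ with $\int K = 1$), and the growth functions $G\i$ are continuous on $[0,p_M]$. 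Combined with $0\leq 1-2r_k\leq 1$ in absolute value, the right-hand side is controlled by $C\int_\R q_k\dx{x}$ for a constant $C$ independent of $k$.

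Gronwall's inequality then yields $\int_\R q_k(t,x)\dx{x} \leq e^{Ct}\int_\R q_k(0,x)\dx{x}$, which immediately gives both assertions: if $\int q_k^0(1-r_k^0)\dx{x} = 0$, then since $q_k\geq 0$ the integral vanishes for all $t\in[0,T]$, forcing $q_k(t,\cdot)\equiv 0$ almost everywhere; and if merely $q_k^0\in L^1(\R)$, we obtain the claimed $L^\infty(0,T;L^1(\R))$ bound. The main technical point — though not really an obstacle — is justifying the boundary-free integration by parts, which is legitimate because the BV-regularity established in Section \ref{sec:Existence} together with the $L^1$ bound on $n_k$ ensures that $q_k(t,\cdot)$ decays at infinity in a suitable sense; alternatively, one can propagate the identity along the characteristic flow constructed in the existence proof, on which $q_k$ satisfies a scalar linear ODE and the conclusion is immediate.
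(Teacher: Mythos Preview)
Your proof is correct and follows essentially the same approach as the paper: derive the transport equation for $q_k = r_k(1-r_k)$, integrate in space, use Brinkman's law to replace $\partial_x^2 W_k$ by $(W_k - p_k)/\nu$, and close a Gronwall estimate using the $L^\infty$-bounds on $p_k$, $W_k$, and $G\i$. The only cosmetic difference is that you first write the PDE for $q_k$ and then integrate, whereas the paper differentiates $\int r_k(1-r_k)\dx{x}$ directly; the computations and the resulting estimate are identical.
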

\begin{proof}
Here and henceforth we shall employ the notation
\begin{equation*}
    \norm*{G\i}_{\infty}\coloneqq \sup_{0\leq p \leq P_M}\abs*{G\i(p)}.
\end{equation*}
The supremum is taken only up to $p_M$, because in principle the functions $G\i$ can decrease arbitrarily. The uniform bound obtained in the previous proof shows however, that only the range $0\leq p_k\leq p_M$ is relevant.

Using the equation for the population fraction and boundedness of the growth functions $G\i$, we obtain
\begin{align*}
	\ddt \int_\R r_k \,\prt{1-r_k}\dx{x} 
	&= \int_\R(1 - 2r_k) \prt*{\partialx{r_k}\partialx{W_k} + r_k (1 - r_k) \brk*{G\1(p_k) - G\2(p_k)}}\dx{x}\\
	&\leq \max_{i=1,2}\,\norm*{G\i}_{\infty}\, \int_\R r_k (1-r_k)\dx{x} + \int_\R \partialx{}\prt*{r_k (1-r_k)} \partialx{W_k} \dx{x}\\
	&\leq \max_{i=1,2}\,\norm*{G\i}_{\infty}\, \int_\R r_k (1-r_k)\dx{x} - \int_\R r_k (1-r_k) \frac{\partial^2 W_k}{\partial x^2} \dx{x}.
\end{align*}
Using Brinkman's law~\eqref{eq:brinkman}, we obtain
\begin{align*}
	\ddt \int_\R r_k \,\prt{1-r_k}\dx{x} 
	&\leq \max_{i=1,2}\,\norm*{G\i}_{\infty}\, \int_\R r_k (1-r_k)\dx{x} + \int_\R r_k (1-r_k) \frac{p_k-W_k}{\nu} \dx{x}\\
	&\leq C \int_\R r_k (1-r_k)\dx{x},
\end{align*}
having used the a priori bounds on the pressure, $p_k$.
\end{proof}

The following lemma establishes an $L^1$-bound on the right-hand side of the pressure equation.

\begin{lemma}[A priori estimates II]
\label{lemma:pQinL1}
    The following estimate holds for any $T>0$
    \begin{equation*}
    k\int_0^T\int_{\R} p_k |W_k - p_k + \nu r_k G\1(p_k) + \nu (1-r_k)G\2(p_k)| \dx{x}\dx{t} \leq C(T),
\end{equation*}
for a constant $C(T)>0$, independent of $k$.
Furthermore, the following bounds hold uniformly in $k$
    \begin{enumerate}[(i)]
%        \item $W_k \in L^\infty(0,T;W^{1,q}(\R))$ for $1\leq q \leq\infty$,
        \item $\ds \partialt{W_k} \in L^1(0,T;L^q(\R))$,\; for $1\leq q\leq\infty$,\\
        \item $\ds \partialt{}{\partialx{W_k}} \in L^1(0,T;L^q(\R))$,\; for $1< q <\infty$.
    \end{enumerate}
\end{lemma}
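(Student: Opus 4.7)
The plan is to address the three parts in order: the main $L^1$-bound is the linchpin, and the two estimates on the time-derivatives of $W_k$ and $\partialx{W_k}$ then follow by time-differentiating Brinkman's law \eqref{eq:brinkman} and convolving with the kernel $K$ of \eqref{eq:SourceSolution}.

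For the principal bound, abbreviate $Q_k := W_k - p_k + \nu r_k G\1(p_k) + \nu(1-r_k)G\2(p_k)$, so that the pressure equation \eqref{eq:PressureEqn} reads $D_t p_k = \tfrac{k-1}{\nu}p_k Q_k$, where $D_t := \partial_t - \partialx{W_k}\partial_x$ denotes the material derivative. Since $p_k \geq 0$, this identity gives $p_k\abs*{Q_k} = \tfrac{\nu}{k-1}\abs*{D_t p_k}$, and the claim reduces to a uniform $L^1(0,T;L^1(\R))$-bound on $D_t p_k$. First, integrating the pressure equation in space, using integration by parts and the substitution $\partial_x^2 W_k = (W_k - p_k)/\nu$ from Brinkman's law, produces
\begin{equation*}
    \ddt \int_\R p_k \dx{x} + \frac{1}{\nu}\int_\R p_k(p_k - W_k)\dx{x} = \frac{k-1}{\nu}\int_\R p_k Q_k \dx{x},
\end{equation*}
whence the \textit{signed} bound $k\int_0^T\int_\R p_k Q_k \dx{x}\dx{t} \leq C(T)$ follows directly from the a priori estimates of Lemma~\ref{lemma:apriori}. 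For the absolute-value upgrade, the idea is to use the conservation form $\partialt{n_k} = \fpartial{x}\prt*{n_k \partialx{W_k}} + n_k G^r$, where $G^r := r_k G\1(p_k) + (1-r_k)G\2(p_k)$: combining the BV-bound on $n_k$ from Section~\ref{sec:Existence} with the uniform $L^\infty$-bounds on $\partialx{W_k}$ and $\partial_x^2 W_k$ (elliptic regularity for Brinkman's law) yields $\partialt{n_k} \in L^1(0,T;L^1(\R))$ uniformly in $k$. Transferring this bound to $D_t p_k$ via the chain rule $D_t p_k = kn_k^{k-2}D_t n_k$ and carefully exploiting the cancellation between $\partialt{p_k}$ and $\partialx{p_k}\partialx{W_k}$ (each individually $O(k)$ in $L^1$) then delivers the required uniform estimate.

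For parts \textit{(i)} and \textit{(ii)}, time-differentiating Brinkman's law gives $\partialt{W_k} = K\star \partialt{p_k}$ and $\partial_t \partialx{W_k} = \partialx{K} \star \partialt{p_k}$. Substituting $\partialt{p_k} = \partialx{p_k}\partialx{W_k} + \tfrac{k-1}{\nu}p_k Q_k$ from the pressure equation, the first contribution is bounded in $L^\infty(0,T;L^1(\R))$ by the product of $\norm*{\partialx{W_k}}_\infty$ with the uniform BV-seminorm of $p_k$, whereas the second is exactly the quantity controlled in the main bound. Hence $\partialt{p_k} \in L^1(0,T;L^1(\R))$ uniformly, and Young's convolution inequality in the spatial variable yields
\begin{equation*}
    \norm*{\partialt{W_k}(t,\cdot)}_{L^q} \leq \norm*{K}_{L^q}\norm*{\partialt{p_k}(t,\cdot)}_{L^1},
\end{equation*}
and analogously with $K$ replaced by $\partialx{K}$. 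Since $K$ and $\partialx{K}$ lie in $L^q(\R)$ in the respective ranges by \eqref{eq:SourceSolution}, integrating in time completes the proof.

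The principal obstacle lies in establishing the sharpness in the main bound. A direct pointwise estimate $\abs*{D_t p_k} \leq (k-1)p_k\prt*{\abs*{\partial_x^2 W_k} + \abs*{G^r}}$ yields only an $O(k)$ control on $\int_0^T\int_\R \abs*{D_t p_k} \dx{x}\dx{t}$, which is off by precisely a factor $k$ from the claimed uniform bound. Recovering the missing $k^{-1}$ rests on the fact that the uniform BV-control is available at the level of $n_k$ (and not directly of $p_k$, whose total variation itself scales like $k$), so that the cancellation inherent in $D_t p_k = kn_k^{k-2}D_t n_k$ must be exploited via the $n_k$-equation rather than directly at the pressure level.
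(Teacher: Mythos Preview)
Your route to the main bound does not close. Writing $D_t p_k = k n_k^{k-2} D_t n_k$ and then using the $n_k$-equation gives $D_t n_k = n_k\bigl(\partial_x^2 W_k + G^r\bigr)$, whence $D_t p_k = k n_k^{k-1}\bigl(\partial_x^2 W_k + G^r\bigr) = (k-1)p_k\bigl(\partial_x^2 W_k + G^r\bigr) = \tfrac{k-1}{\nu}p_k Q_k$. This is just the pressure equation again; no cancellation has taken place, and the uniform BV-bound on $n_k$ buys you exactly $\int|D_t n_k|\leq C$, hence $\int|D_t p_k|\leq Ck$, which is the $O(k)$ estimate you yourself flag as insufficient. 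There is no hidden gain of $k^{-1}$ to be extracted from the $n_k$-level. (As a side remark, the BV-bound on $n_k$ from Section~\ref{sec:Existence} is not uniform in $k$: the Gronwall constant there depends on the Lipschitz constants of the $f^{(i)}$, which carry the prefactor $(k-1)/\nu$. The uniform BV-bound appears only in Section~\ref{sec:StrongCompactnessOfPressure}.)

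The paper's mechanism is different and uses the assumption $G^{(i)}_p\leq -\alpha<0$ in an essential way: one derives the evolution equation for $Q_k$ itself. Differentiating $Q_k$ along the flow produces the term $-\tfrac{k-1}{\nu}p_kQ_k\bigl[1 - r_kG^{(1)}_p - (1-r_k)G^{(2)}_p\bigr]$, and the bracket is bounded below by $1+\alpha$. After passing to $|Q_k|$ and integrating, this puts $(k-1)\alpha\int p_k|Q_k|$ on the left-hand side, while the right-hand side is controlled uniformly by the a priori estimates of Lemma~\ref{lemma:apriori} (after an integration by parts to avoid $\partial_x p_k$). Your argument never invokes the monotonicity of the $G^{(i)}$, which is precisely what generates the factor $k$ on the favourable side. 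A similar issue affects your treatment of (i)--(ii): you assume a uniform BV-seminorm for $p_k$, but $\int|\partial_x p_k| = k\int n_k^{k-2}|\partial_x n_k|$ is not uniformly controlled; the paper instead integrates by parts inside the convolution, writing $K\star(\partial_x p_k\,\partial_x W_k) = \partial_x K\star(p_k\,\partial_x W_k) - K\star(p_k\,\partial_x^2 W_k)$, so that only $p_k$ and derivatives of $W_k$ appear.
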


\begin{proof}
Let us introduce the following notation 
\begin{equation*}
    Q_k\coloneqq W_k - p_k + \nu r_k G\1(p_k) + \nu (1-r_k)G\2(p_k),
\end{equation*}
and follow the strategy of~\cite{PV15}.
Using
\begin{equation}\label{eq:tshiftofW}
    \partialt{W_k} = K \star \brk*{\partialx{p_k}\partialx{W_k} + \frac{k-1}{\nu}p_k Q_k},
\end{equation}
we derive the equation
\begin{equation*}
    \begin{aligned}
    \partialt{Q_k} &- \partialx{Q_k} \partialx{W_k} + \frac{k-1}{\nu}p_kQ_k\brk*{1-r_k G\1_p(p_k)-\prt*{1-r_k} G\2_p(p_k)}\\
    &= -\abs*{\partialx{W_k}}^2 + K \star \brk*{\partialx{p_k}\partialx{W_k} + \frac{k-1}{\nu}p_k Q_k} + \nu\prt*{G\1(p_k) - G\2(p_k)}^2r_k(1-r_k),
    \end{aligned}
\end{equation*}
and consequently,
\begin{equation*}
    \begin{aligned}
    \partialt{\abs*{Q_k}} &- \partialx{\abs*{Q_k}} \partialx{W_k} + \frac{k-1}{\nu}p_k\abs*{Q_k}\brk*{1-r_k G\1_p(p_k)-(1-r_k) G\2_p(p_k)}\\ 
    &\leq -\abs*{\partialx{W_k}}^2 + \abs*{K \star \brk*{\partialx{p_k}\partialx{W_k} + \frac{k-1}{\nu}p_k \abs*{Q_k}}} + \nu\prt*{G\1(p_k) - G\2(p_k)}^2r_k\prt*{1-r_k}.
    \end{aligned}
\end{equation*}
Integrating in space-time and using the assumption that $|G\i_p|\geq \alpha > 0$, we obtain
\begin{equation*}
    \begin{aligned}
    \alpha(k-1)\int_0^T\int_\R p_k\abs*{Q_k} \dx{x}\dx{t} & \leq \int_\R\abs*{Q_k(x,0)} - \abs*{Q_k(x,T)} \dx{x} + \int_0^T\int_\R\abs*{\partialx{W_k}}^2 \dx{x}\dx{t}\\
    &\;\; + \nu^{-1}\int_0^T\int_\R\abs*{Q_k}\prt*{p_k - W_k} + \abs*{K \star \brk*{\partialx{p_k}\partialx{W_k}}}  \dx{x}\dx{t}\\
    &\;\; + \nu  \int_0^T\int_\R \nu\prt*{G\1(p_k) - G\2(p_k)}^2r_k(1-r_k) \dx{x}\dx{t}.
    \end{aligned}
\end{equation*}
The first three terms on the right-hand side are controlled uniformly, as is the very last term. For the remaining two terms we write
\begin{equation*}
    \nu^{-1}\int_0^T\int_\R|Q_k|(p_k - W_k)\dx{x}\dx{t} \leq \nu^{-1}\int_0^T\int_\R|Q_k|p_k\dx{x}\dx{t},
\end{equation*}
which, for $k$ large enough, is controlled by the left-hand side of the last inequality, and
\begin{equation*}
    K \star \brk*{\partialx{p_k}\partialx{W_k}} = \partialx{K} \star \brk*{p_k\partialx{W_k}} - K \star \brk*{p_k\frac{\partial^2 W_k}{\partial x^2}} =  \partialx{K} \star \brk*{p_k \partialx{K}\star p_k} - K \star \brk*{p_k\frac{\partial^2 W_k}{\partial x^2}}.
\end{equation*}
Using Lemma~\ref{lemma:apriori}, we see that the right-hand side  is uniformly bounded in $L^\infty(0,T;L^q(\R))$, $1\leq q\leq\infty$. It follows that
\begin{equation*}
    \alpha(k-1)\int_0^T\int_\R p_k|Q_k| \dx{x}\dx{t} \leq C(T),
\end{equation*}
as desired.

Now, using Eq.~\eqref{eq:tshiftofW} and the above computations, it is clear that $\partial_t{W_k}$ is uniformly bounded in $L^\infty(0,T;L^q(\R))$, for $1\leq q\leq\infty$. Finally we write
\begin{equation*}
    \partialt{}\partialx{W_k} = \frac{\partial^2 K}{\partial x^2}\star\prt*{p_k\partialx{W_k}} - \partialx{K}\star\prt*{p_k\frac{\partial^2 W_k}{\partial x^2}} + \frac{k-1}{\nu}\partialx{K}\star \prt*{p_k Q_k},
\end{equation*}
and use the definition of $K$, \cf Eq. \eqref{eq:SourceSolution}, to conclude the proof.
\end{proof}

\begin{remark}
All the results of this section remain valid in any spatial dimension $d\geq1$, see for example~\cite{PV15} for the a priori estimates, and the $L^1$-bound on the quantity $kp_k Q_k$.
\end{remark}

\section{Strong Compactness of the Pressure}
\label{sec:StrongCompactnessOfPressure}
This section is solely dedicated to the derivation suitable estimates in order to obtain strong compactness of the pressure, $p_k$. A key step in this pursuit  is the following BV-estimate on the individual species as well as the total population.
\begin{lemma}[Regularity of $n\i_k$ and $n_k$]
    %There exists a constant $C>0$, such that
    For $i=1,2$, we have the following
    \begin{align*}
        \abs*{\partialx {n\i_k}}, \abs*{\partialx {n_k}} \in L^\infty(0,T; L^1(\R)),
    \end{align*}
    uniformly in $k\geq2$.
\end{lemma}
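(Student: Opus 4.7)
The strategy is to avoid estimating $\partial_x n_k\i$ directly — the equation for $\partial_x n_k\i$ couples to $\partial_x p_k = (k-1)(p_k/n_k)\partial_x n_k$ through cross-species terms of order $(k-1)p_k$ which I do not see how to close on their own. Instead I control the pair
\begin{equation*}
    u_k := \partial_x n_k, \qquad v_k := n_k\, \partial_x r_k,
\end{equation*}
uniformly in $k$. Since $n_k\1 = r_k n_k$ and $n_k\2 = (1-r_k) n_k$, differentiating gives $\partial_x n_k\1 = r_k u_k + v_k$ and $\partial_x n_k\2 = (1-r_k) u_k - v_k$, so $|\partial_x n_k\i| \leq |u_k| + |v_k|$ pointwise, and uniform $L^\infty(0,T;L^1)$ bounds on $u_k$ and $v_k$ will suffice. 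Initial data for both quantities are uniformly bounded in $L^1$ by assumption \eqref{eq:InitialData}.

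For $u_k$, differentiate Eq.~\eqref{eq:totalpopulation} in $x$, substitute $\partial_x^2 W_k = (W_k - p_k)/\nu$ and $\partial_x^3 W_k = (\partial_x W_k - \partial_x p_k)/\nu$ from Brinkman's law, and use $\partial_x p_k = (k-1)(p_k/n_k) u_k$. After multiplying by $\sign(u_k)$, integrating on $\R$, and killing the $\partial_x W_k$-transport via integration by parts, the coefficient in front of $(k-1)p_k |u_k|$ reduces to $r_k G\1_p(p_k) + (1-r_k) G\2_p(p_k) - 1/\nu$, which by \eqref{eq:AssumptionsOnG} is bounded above by $-(\alpha + 1/\nu)$. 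All other contributions are $k$-uniformly bounded by Lemma~\ref{lemma:apriori} and by $\|\partial_x W_k\|_\infty \leq \|\partial_x K\|_\infty \|p_k\|_1 \leq C$. Coupling to $v_k$ enters only through the $\partial_x r_k$-piece of $\partial_x[r_k G\1(p_k) + (1-r_k)G\2(p_k)]$, yielding
\begin{equation*}
    \ddt \int_\R |u_k|\,\dx{x} \leq C + C \int_\R (|u_k|+|v_k|)\,\dx{x} - (\alpha + 1/\nu)(k-1) \int_\R p_k |u_k|\,\dx{x}.
\end{equation*}

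For $v_k$, compute $\partial_t(n_k \partial_x r_k)$ directly from \eqref{eq:totalpopulation} and \eqref{eq:RatioEqn}. After simplification one finds the transport equation
\begin{equation*}
    \partial_t v_k - \partial_x W_k \,\partial_x v_k = \big[2\partial_x^2 W_k + H + (1-2r_k) B\big] v_k + (k-1) r_k(1-r_k)\big(G\1_p - G\2_p\big) p_k u_k,
\end{equation*}
with $H := r_k G\1 + (1-r_k) G\2$ and $B := G\1 - G\2$, where $n_k \partial_x p_k = (k-1) p_k u_k$ has been used in the last term. The coefficient of $v_k$ is uniformly bounded; using $r_k(1-r_k) \leq 1/4$, multiplying by $\sign(v_k)$, and integrating gives
\begin{equation*}
    \ddt \int_\R |v_k|\,\dx{x} \leq C \int_\R |v_k|\,\dx{x} + C(k-1) \int_\R p_k |u_k|\,\dx{x}.
\end{equation*}

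Forming $\int |u_k| + \lambda \int |v_k|$ with $\lambda > 0$ small enough that $\lambda C \leq \alpha + 1/\nu$ (where $C$ is the constant multiplying $(k-1)\int p_k|u_k|$ in the $v_k$-estimate), the dangerous $(k-1)\int p_k|u_k|$ contribution is absorbed by the good negative term in the $u_k$-estimate, producing a Gronwall-closed inequality with $k$-independent constants, and hence the lemma. The main obstacle I expect is bookkeeping: one must make sure every occurrence of $\partial_x p_k$, $\partial_x^2 W_k$, and $\partial_x^3 W_k$ is absorbed either into the good $-(\alpha+1/\nu)(k-1)p_k|u_k|$ term or into a $k$-uniformly bounded quantity, so that no residual term of order $(k-1) p_k$ with undetermined sign survives the combination argument.
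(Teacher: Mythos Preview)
Your argument is correct and takes a genuinely different route from the paper. The paper estimates the triple $|\partial_x n_k\1| + |\partial_x n_k\2| + |\partial_x n_k|$ simultaneously: after multiplying each differentiated equation by the corresponding sign, the dangerous $\partial_x p_k$ contributions from the two species equations are overestimated as $n_k\i\,|G_p\i|\,|\partial_x p_k|$, while in the equation for $n_k$ one has $\sign(\partial_x n_k)\,\partial_x p_k = |\partial_x p_k|$ exactly, so those same terms appear with the genuine (negative) sign $n_k\i\, G_p\i\,|\partial_x p_k|$. Adding the three lines produces an exact cancellation, and the analogous $\partial_x p_k$ pieces coming from $\partial_x^3 W_k$ cancel via $n_k\1 + n_k\2 - n_k = 0$. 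No weighting is required.

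Your decomposition $(u_k,v_k) = (\partial_x n_k,\, n_k\partial_x r_k)$ instead isolates the mechanism: all of the $(k-1)$--order dissipation sits in the $u_k$--equation, while the cross--species defect $v_k$ solves a transport equation whose only large source is proportional to $p_k|u_k|$, not $p_k|v_k|$; a small multiple $\lambda$ of the $v_k$--estimate is therefore absorbable. This makes the structural reason for closure more transparent and would adapt cleanly if additional ``ratio'' variables were present. The paper's three--line trick, on the other hand, is slightly shorter and delivers the good term with the full constant $\alpha + \nu^{-1}$ rather than the residual $\alpha + \nu^{-1} - \lambda C$; in either case one recovers the corollary $\int_0^T\!\int_\R n_k\,|\partial_x p_k| \leq C$ needed later for the strong compactness of the pressure.
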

\begin{proof}
For $i=1,2$, we consider
\begin{align*}
    \partialt{n_k\i} = \fpartial x \prt*{ n\i_k  \partialx {W_k}} + n\i_k G\i(p_k).
\end{align*}
Upon differentiating in space, we obtain
\begin{align}
    \label{eq:ni_x}
    \fpartial t \partialx {n\i_k} = \fpartial x \prt*{\partialx {n\i_k } \partialx W} + \fpartial x \prt*{n\i_k \frac{\partial^2 W}{\partial x ^2}} + \partialx {n\i_k} G\i(p_k) + n\i_k G_p\i(p_k) \partialx {p_k},
\end{align}
for $i=1,2$. Upon adding up both equations we get
\begin{align}
    \label{eq:n_x}
    \fpartial t \partialx {n_k} =  \fpartial x \prt*{\partialx {n_k} \partialx {W_k}} + \fpartial x\prt*{n_k \frac{\partial^2 W_k}{\partial x^2}} +  \sum_{i=1,2}  \partialx {n\i_k} G\i(p_k) + n\i_k G_p\i(p_k) \partialx {p_k}.
\end{align}
Multiplying the equation for the individual species by $\sigma\i:=\sign{(n_k\i)}$ and the equation for the total population by $\sigma:=\sign{(n_k)}$, we get, upon adding the three equations \eqref{eq:ni_x}, \eqref{eq:n_x} and integrating in space
\begin{align*}
    \ddt \int_\R &\abs*{\partialx {n\1_k}} + \abs*{\partialx {n\2_k}}+ \abs*{\partialx {n_k}} \dx{x}\\
    &= \int_\R \fpartial x \prt*{ \abs*{\partialx {n\1_k }} \partialx {W_k}} + \fpartial x \prt*{n\i_k \frac{\partial^2 W_k}{\partial x ^2}}\sigma\1 + \abs*{\partialx {n\1_k}} G\1(p_k) + \sigma\1 n\1_k G_p\1(p_k) \partialx {p_k}\\
    &\qquad + \fpartial x \prt*{ \abs*{\partialx {n\2_k }} \partialx {W_k}} + \fpartial x \prt*{n\2_k \frac{\partial^2 W_k}{\partial x ^2}}\sigma\2 + \abs*{ \partialx {n\2_k}} G\2(p_k) + \sigma\2 n\2_k G_p\2(p_k) \partialx {p_k}\\
    &\qquad +\fpartial x \prt*{ \abs*{\partialx {n_k}} \partialx {W_k}} + \fpartial x\prt*{n_k \frac{\partial^2 W_k}{\partial x^2}}\sigma  +  \sum_{i=1,2} \sigma \partialx {n\i_k} G\i(p_k) + n\i_k G_p\i(p_k) \abs*{\partialx {p_k}} \dx{x}.
\end{align*}
First we notice that the exact derivatives vanish and the estimate simplifies to 
\begin{align*}
    \ddt \int_\R &\abs*{\partialx {n\1_k}} + \abs*{\partialx {n\2_k}}+ \abs*{\partialx {n_k}} \dx{x}\\
    &\leq \int_\R \fpartial x \prt*{n\1_k \frac{\partial^2 W_k}{\partial x ^2}}\sigma\1 + \abs*{\partialx {n\1_k}} G\1(p_k) + n\1_k \abs*{G_p\1(p_k)} \abs*{\partialx {p_k}}\\
    &\qquad + \fpartial x \prt*{n\2_k \frac{\partial^2 W_k}{\partial x ^2}}\sigma\2 + \abs*{ \partialx {n\2_k}} G\2(p_k) + n\2_k \abs*{G_p\2(p_k)} \abs*{\partialx {p_k}}\\
    &\qquad  + \fpartial x\prt*{n_k \frac{\partial^2 W_k}{\partial x^2}}\sigma  +  \sum_{i=1,2} \abs*{\partialx {n\i_k}} \norm*{G\i}_\infty +  n\i_k G_p\i(p_k) \abs*{\partialx {p_k}} \dx{x}.
\end{align*}
Next, we notice that the all the terms involving the pressure gradient cancel due to opposite signs, whence
\begin{align*}
    \ddt \int_\R &\abs*{\partialx {n\1_k}} + \abs*{\partialx {n\2_k}}+ \abs*{\partialx {n_k}} \dx{x}\\
    &\leq \int_\R \fpartial x \prt*{n\1_k \frac{\partial^2 W_k}{\partial x ^2}}\sigma\1 + \abs*{\partialx {n\1_k}} G\1(p_k)\\
    &\qquad + \fpartial x \prt*{n\2_k \frac{\partial^2 W_k}{\partial x ^2}}\sigma\2 + \abs*{ \partialx {n\2_k}} G\2(p_k)\\
    &\qquad  + \fpartial x\prt*{n_k \frac{\partial^2 W_k}{\partial x^2}}\sigma  +  \sum_{i=1,2} \abs*{\partialx {n\i_k}} \norm*{G\i}_\infty  \dx{x}.
\end{align*}
Thus we are left with
\begin{align}
    \label{eq:SomeIntermediate1}
    \begin{split}
    \ddt \int_\R &\abs*{\partialx {n\1_k}} + \abs*{\partialx {n\2_k}}+ \abs*{\partialx {n_k}} \dx{x}\\
    &\leq C \int_\R\abs*{\partialx {n\1_k}} + \abs*{\partialx {n\2_k}}+ \abs*{\partialx {n_k}} \dx{x} \\
    &\quad + \int_\R \fpartial x \prt*{n\1_k \frac{\partial^2 {W_k}}{\partial x ^2}}\sigma\1  + \fpartial x \prt*{n\2_k \frac{\partial^2 {W_k}}{\partial x ^2}}\sigma\2 + \fpartial x\prt*{n_k \frac{\partial^2 W_k}{\partial x^2}}\sigma \dx{x}.
    \end{split}
\end{align}
Using the fact that
\begin{align*}
    - \nu \frac{\partial^2 W_k}{\partial x^2}  + W_k = p_k,
\end{align*}
the integrand of the last line of Eq. \eqref{eq:SomeIntermediate1} may be simplified to
\begin{align}
    \label{eq:SomeIntermediate2}
    \begin{split}
    &\fpartial x \prt*{n\1_k \frac{\partial^2 {W_k}}{\partial x ^2}}\sigma\1  + \fpartial x \prt*{n\2_k \frac{\partial^2 {W_k}}{\partial x ^2}}\sigma\2 + \fpartial x\prt*{n_k \frac{\partial^2 W_k}{\partial x^2}}\sigma\\
    &=\nu^{-1} \abs*{\partialx {n_k\1}} (W_k-p_k ) + \nu^{-1} n_k\1 \fpartial x \prt{W_k  - p_k } \sigma\1 \\
    &\quad + \nu^{-1} \abs*{\partialx {n_k\2}} (W_k-p_k ) + \nu^{-1} n_k\2 \fpartial x \prt{W_k - p_k} \sigma\2 \\
    & \quad + \nu^{-1} \abs*{\partialx {n_k}} (W_k-p_k ) + \nu^{-1} n_k \fpartial x \prt{W_k  - p_k } \sigma.
    \end{split}
\end{align}
Using the fact that $\abs{\sigma\i}, \abs{\sigma}\leq 1$ and exploiting the bounds
\begin{align*}
    n_k\i, n_k \in L^\infty(0,T; L^1(\R)), \qquad \text{and} \qquad \partialx {W_k} \in L^\infty(0,T; L^\infty(\R)),
\end{align*}
we may bound the terms of Eq. \eqref{eq:SomeIntermediate2}, and the last line of Eq. \eqref{eq:SomeIntermediate1} becomes
\begin{align}
    \label{eq:SomeIntermediate3}
    \begin{split}
    \int_\R \fpartial x &\prt*{n\1_k \frac{\partial^2 W_k}{\partial x ^2}}\sigma\1  + \fpartial x \prt*{n\2_k \frac{\partial^2 W_k}{\partial x ^2}}\sigma\2 + \fpartial x\prt*{n_k \frac{\partial^2 W_k}{\partial x^2}}\sigma\dx{x} \\
    &= C \nu^{-1} \int_R  \abs*{\partialx {n_k\1}} +   \abs*{\partialx {n_k\2}} + \abs*{\partialx {n_k}}\dx{x}\\
    &\quad + C + \nu^{-1}\int_\R  n_k\1 \abs*{\partialx  {p_k}}+ n_k\2 \abs*{\partialx {p_k}} -  n_k \abs*{\partialx  {p_k}}\dx{x}.
    \end{split}
\end{align}
The last integral  in Eq.~\eqref{eq:SomeIntermediate3} vanishes due to the fact that $n_k = n\1_k + n\2_k$. Thus, substituting Eq.~\eqref{eq:SomeIntermediate3} into Eq.~\eqref{eq:SomeIntermediate1}, an application of Gronwall's lemma yields the BV-estimate in space. 
\end{proof}

\begin{corollary}
From the proof of the preceding lemma we deduce
\begin{align}
    \int_0^T \int_\R n_k\abs*{\partialx {p_k}}\dx{x}\dx{t} \leq C,
\end{align}
where $C>0$ is independent of $k$.
\end{corollary}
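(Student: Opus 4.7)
The plan is to revisit the BV-computation for the \emph{total} population $n_k$ from the proof of the preceding lemma and to extract, as a dissipation term on the left-hand side, the quantity $\int_\R n_k \abs*{\partialx{p_k}} \dx{x}$ which was silently cancelled there against other pressure-gradient contributions. The key algebraic observation I would exploit is that, since $p_k = \tfrac{k}{k-1} n_k^{k-1}$, one has $\partialx{p_k} = k n_k^{k-2}\, \partialx{n_k}$, so that, writing $\sigma := \sign(\partialx{n_k})$, the signs align perfectly and $\sigma\, \partialx{p_k} = \abs*{\partialx{p_k}}$ pointwise.

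With this in mind, I would differentiate \eqref{eq:totalpopulation} once in $x$, multiply by $\sigma$, and integrate. The transport-type contributions integrate out in the usual Kru\v{z}hkov--Kato sense, exactly as was done in the preceding lemma. The term $\sigma n_k \tfrac{\partial^3 W_k}{\partial x^3}$ arising upon expanding $\tfrac{\partial^2}{\partial x^2}\prt*{n_k \partialx{W_k}}$ becomes, via differentiation of Brinkman's law, equal to $\nu^{-1}\sigma n_k\prt{\partialx{W_k} - \partialx{p_k}}$, producing in particular the genuinely negative contribution $-\nu^{-1} n_k \abs*{\partialx{p_k}}$. Likewise, the differentiated source contributes $\sigma \sum_i n\i_k G\i_p(p_k) \partialx{p_k} = \sum_i n\i_k G\i_p(p_k) \abs*{\partialx{p_k}}$, which by assumption \eqref{eq:AssumptionsOnG} is bounded above by $-\alpha n_k \abs*{\partialx{p_k}}$.

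Moving both negative contributions to the left-hand side and bounding the remaining terms using Lemma~\ref{lemma:apriori} (uniform $L^\infty$-bounds on $W_k$, $p_k$ and $\partialx{W_k}$, and the uniform $L^1$-bound on $n_k$) together with the BV-bound from the preceding lemma, I expect to obtain an inequality of the form
\begin{equation*}
\ddt \int_\R \abs*{\partialx{n_k}} \dx{x} + \prt*{\nu^{-1} + \alpha}\int_\R n_k \abs*{\partialx{p_k}}\dx{x} \leq C \int_\R \abs*{\partialx{n_k}}\dx{x} + C.
\end{equation*}
Integrating in time from $0$ to $T$ and using the uniform BV-bound on the initial data in \eqref{eq:InitialData}, the first term on the left is controlled from below by $0$, and the Gronwall-type right-hand side is already known to be uniformly bounded, whence the claimed estimate follows.

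The only delicate point, which is not really an obstacle, is the rigorous justification of the multiplication by $\sigma$ and the pointwise identification $\sigma\,\partialx{p_k} = \abs*{\partialx{p_k}}$ on the set where $\partialx{n_k} = 0$; this is handled by regularising the sign function and passing to the limit in the standard way, exactly as is already implicit in the proof of the preceding lemma.
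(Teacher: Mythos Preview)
Your proposal is correct and follows essentially the same route as the paper: you revisit the BV computation for $n_k$ alone, use $\sigma\,\partialx{p_k}=\abs*{\partialx{p_k}}$, extract the combined dissipation $(\nu^{-1}+\alpha)\int_\R n_k\abs*{\partialx{p_k}}\dx{x}$ from the Brinkman and growth contributions, bound the remainder with Lemma~\ref{lemma:apriori} and the preceding BV estimate, and integrate in time. The paper does precisely this, with the same resulting inequality and constants.
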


\begin{proof}
Let us revisit the equation for $\partial_t {n_k}$, \ie, 
\begin{align*}
    \ddt \int_\R \abs*{\partialx {n_k}} \dx{x}
    &\leq C + C \int_\R\abs*{\partialx {n\1_k}} + \abs*{\partialx {n\2_k}}+ \abs*{\partialx {n_k}} \dx{x}\\
    &\quad + \int_\R \prt*{n\1_k G\1_p(p_k) + n\2_k G\2_p(p_k) - \nu^{-1}n_k}\abs*{\partialx {p_k}} \dx{x}.
\end{align*}
Now we use the bounds $G\i_p\leq-\alpha<0$, for $i=1,2$, and integrate in time to see that
\begin{align*}
    (\nu^{-1}+\alpha)\int_0^T\int_\R n_k\abs*{\partialx {p_k}} \dx{x}\dx{t} \leq 2\norm*{\partialx{n_k}}_{L^\infty(0,T;L^1(\R))} + C  T R,
\end{align*}
where 
\begin{align*}
    R:= \norm*{\partialx{n_k}}_{L^\infty(0,T; L^1(\R))}+\norm*{\partialx{n\1_k}}_{L^\infty(0,T; L^1(\R))}+\norm*{\partialx{n\2_k}}_{L^\infty(0,T; L^1(\R))}+1.
\end{align*}
Thus we infer that $n_k\partial_x{p_k}$ is uniformly bounded in $L^1(0,T;L^1(\R))$.
\end{proof}

\begin{lemma}[Strong Compactness of the Pressure]
    \label{lem:StrongCompactnessPressure}
    There exists a function
    \[
    p_\infty \in L^\infty\prt*{0,T; L^1(\R) \cap L^\infty(\R)},
    \] such that there holds
    \begin{align*}
        p_k \longrightarrow p_\infty,
    \end{align*}
    up to a subsequence, as $k\rightarrow \infty$ in any $L_{\mathrm{loc}}^p(0,T; L^q(\R))$, for $2\leq p, q<\infty$. In addition, the convergence also holds in the pointwise almost everywhere sense.
\end{lemma}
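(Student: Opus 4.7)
The plan is to establish strong compactness of two quantities---the total density $n_k$ and the product $N_k := n_k p_k$---and then combine them to deduce almost everywhere convergence of $p_k$, after which convergence in $L^p_{\mathrm{loc}}(0,T; L^q(\R))$ will follow by dominated convergence. For the first, I would use the uniform BV-bound on $n_k$ from the preceding lemma together with the equation $\partialt{n_k} = \fpartial x \prt*{n_k \partialx{W_k}} + n_k\brk*{r_k G\1(p_k) + (1-r_k) G\2(p_k)}$. By Lemma~\ref{lemma:apriori} (recall $\partialx{W_k} = \partialx{K}\star p_k$ is uniformly bounded in $L^\infty$) and the boundedness of the growth terms, $\partialt{n_k}$ is uniformly bounded in $L^\infty(0,T; W^{-1,q}(\R))$ for suitable $q$. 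Combining this with the uniform BV-bound in space, an application of the Aubin--Lions--Simon lemma (using the compact embedding $BV\hookrightarrow L^1_{\mathrm{loc}}$) yields a subsequence along which $n_k\to n_\infty$ in $L^1_{\mathrm{loc}}((0,T)\times\R)$ and almost everywhere.

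Next I would derive a clean evolution for $N_k = n_k p_k$. Using the equations for $n_k$ and $p_k$, the cross-terms involving $\partialx{p_k}\,\partialx{W_k}$ telescope against $p_k \fpartial x \prt*{n_k\partialx{W_k}}$ and one obtains
\begin{equation*}
    \partialt{N_k} = \fpartial x\prt*{N_k\partialx{W_k}} + N_k\brk*{r_k G\1(p_k) + (1-r_k)G\2(p_k)} + \tfrac{k-1}{\nu}\, n_k p_k Q_k.
\end{equation*}
The first two terms on the right-hand side are uniformly controlled (in $L^\infty(W^{-1,q})$ and $L^\infty(L^1\cap L^\infty)$ respectively), while the third is bounded in $L^1(L^1)$ by Lemma~\ref{lemma:pQinL1} together with the $L^\infty$-bound on $n_k$. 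For the spatial regularity, one writes $\abs*{\partialx{N_k}}\leq p_M\abs*{\partialx{n_k}} + n_k\abs*{\partialx{p_k}}$, with the first summand uniformly bounded in $L^\infty(L^1)$ by the preceding lemma and the second in $L^1(L^1)$ by the corollary. A second application of the Aubin--Lions--Simon lemma delivers a further subsequence with $N_k\to N_\infty$ almost everywhere. The main delicate point lies precisely here: the $(k-1)/\nu$ prefactor in the source term is exactly balanced by the $L^1$-bound of Lemma~\ref{lemma:pQinL1}, but this cancellation depends on the observation that the cross-terms in $\partialt{(n_k p_k)}$ collapse into the conservative form $\fpartial x\prt*{N_k\partialx{W_k}}$.

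The pointwise limit of $p_k$ is then identified through a dichotomy. The bound $p_k\leq p_M$ combined with $p_k=\tfrac{k}{k-1}n_k^{k-1}$ forces $n_k\leq\prt*{\tfrac{k-1}{k}p_M}^{1/(k-1)}\to 1$, hence $n_\infty\leq 1$ almost everywhere. On the set $\{n_\infty<1\}$, for $k$ large enough $n_k\leq (1+n_\infty)/2 < 1$ pointwise, so $n_k^{k-1}\to 0$ and therefore $p_k\to 0$. On $\{n_\infty=1\}$, $n_k\to 1 > 0$, so one may write $p_k = N_k / n_k$ and conclude $p_k\to N_\infty$ almost everywhere. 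Defining $p_\infty$ accordingly, the uniform bound $0\leq p_k\leq p_M$ together with $p_k\in L^\infty(0,T;L^1(\R))$ and dominated convergence upgrade the almost everywhere convergence to convergence in $L^p_{\mathrm{loc}}(0,T;L^q(\R))$ for every $1\leq p,q<\infty$, comfortably covering the stated range, and the bounds $p_\infty\in L^\infty(0,T; L^1(\R)\cap L^\infty(\R))$ pass to the limit from the corresponding uniform bounds on $p_k$.
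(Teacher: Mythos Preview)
Your argument is correct, and its first half is in fact the same computation as the paper's, just written in different variables: since $n_k p_k = \bigl(\tfrac{k-1}{k}\bigr)^{1/(k-1)} p_k^{k/(k-1)}$, your $N_k$ coincides (up to a factor tending to $1$) with the paper's $\phi_k(p_k)=p_k^{k/(k-1)}$. The paper obtains the space--time $BV$ bound on $\phi_k(p_k)$ directly from the pressure equation, while you derive the equivalent evolution law for $N_k$; either way one lands on the same $L^1$ control of $\partial_x(n_kp_k)$ via the corollary and of $\partial_t(n_kp_k)$ via Lemma~\ref{lemma:pQinL1}.

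The genuine difference is in how one passes from compactness of $n_kp_k$ to compactness of $p_k$. The paper avoids any further compactness step: it takes the weak limit $p_k\rightharpoonup p_\infty$, invokes the Hilhorst--van der Hout--Peletier lemma (Lemma~\ref{lem:CompactnessHHP}) to identify the strong limit of $\phi_k(p_k)$ with $p_\infty$, and then closes via the elementary estimate $\norm{p_k-\phi_k(p_k)}_{L^2}\to 0$. Your route instead spends an additional Aubin--Lions argument on $n_k$ and then runs the pointwise dichotomy on $\{n_\infty<1\}$ versus $\{n_\infty=1\}$. What this buys you is that you never need the monotonicity-based compactness lemma from the appendix, and you obtain the relation $(1-n_\infty)p_\infty=0$ essentially for free as a byproduct of the dichotomy; the cost is one extra compactness step and a case distinction. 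Both approaches are valid and of comparable length.
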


\begin{proof}
Let us write the quantity $n_k\abs*{\partial_x p_k}$ as a spatial derivative of a non-decreasing function of the pressure. We compute as follows
\begin{align}
    n_k \abs*{\partialx{p_k}} = \prt*{\frac{k-1}{k}}^{\frac{k}{k-1}}\sign{\prt*{\partialx{p_k}}}\partialx{}\prt*{p^{\frac{k}{k-1}}} = \prt*{\frac{k-1}{k}}^{\frac{k}{k-1}}\abs*{\partialx{}\prt*{p^{\frac{k}{k-1}}}}.
\end{align}
Let $\phi_k(z)\coloneqq z^{\frac{k}{k-1}}$. Then
\begin{align*}
    \int_0^T\int_\R n_k\abs*{\partialx {p_k}} \dx{x}\dx{t} \geq \frac14 \int_0^T\int_\R \abs*{\partialx{\phi_k}(p_k)} \dx{x}\dx{t},
\end{align*}
\ie, \ $\partial_x{\phi_k}(p_k) \in L^1(0,T;L^1(\R))$, uniformly in $k$.  Moreover, we have the same $L^1$-bound for the time derivative of $\phi_k(p_k)$. Indeed
\begin{align*}
    \partialt{\phi_k}(p_k) = \phi_k'(p_k)\partialt{p_k} = \phi_k'(p_k)\prt*{\partialx{p_k}\partialx{W_k} + \frac{k-1}{\nu}p_k Q_k} = \partialx{\phi_k}(p_k)\partialx{W_k} + \frac{k}{k-1}\frac{k-1}{\nu}p_k^{\frac1{k-1}}p_k Q_k,
\end{align*}
and therefore
\begin{align*}
    \int_0^T\int_\R \abs*{\partialt{\phi_k}(p_k)} \dx{x}\dx{t} &\leq \norm*{\partialt{\phi_k}(p_k)}_{L^1(0,T;L^1(\R))}\norm*{\partialx{W}}_{L^\infty(0,T;L^\infty(\R))} \\
    &\qquad +2p_{M}^{\frac1{k-1}}\frac{k-1}{\nu}\int_0^T\int_\R p_k \abs*{Q_k} \dx{x}\dx{t}\\
    &\leq C,
\end{align*}
where we have used Lemma~\ref{lemma:pQinL1}.

\bigskip

We conclude that the sequence $(\phi_k(p_k))_k$ converges strongly in $L^2((0,T)\times\R)$. On the other hand, from the uniform bounds on $p_k$ we infer that $p_k\rightharpoonup p_\infty$, weakly in $L^2((0,T)\times\R)$, up to the subsequence. We can therefore apply Lemma \ref{lem:CompactnessHHP} to conclude that
\begin{equation}
    \phi_k(p_k) \rightarrow p_\infty, 
\end{equation}
strongly in $L_{\mathrm{loc}}^2((0,T)\times\R)$.
We claim that this in fact implies strong convergence of the sequence of  pressures $(p_k)_k$ itself. Indeed, using the triangle inequality yields
\begin{align*}
    \norm{p_k - p_\infty}_{L^2(0,T;L^2(\R))} \leq \norm{p_k - \phi_k(p_k)}_{L^2(0,T;L^2(\R))} + \norm{\phi_k(p_k) - p_\infty}_{L^2(0,T;L^2(\R))},
\end{align*}
and
\begin{align*}
    \int_0^T\int_\R \abs*{p_k - p_k^{\frac{k}{k-1}}}^2 \dx{x}\dx{t} &= \int_0^T\int_\R \abs*{p_k}\abs*{\sqrt{p_k} - p_k^{\frac{1}{k-1}+\frac12}}^2 \dx{x}\dx{t}\\
    &\leq \sup_{0\leq z\leq p_M}\abs*{\sqrt{z} - \sqrt{z}z^{\frac1{k-1}}}^2\norm{p_k}_{L^1(0,T;L^1(\R))},
\end{align*}
with the right-hand side of the last line converging to zero. We conclude that
\begin{equation*}
    p_k \rightarrow p_\infty,
\end{equation*}
strongly in $L_{\mathrm{loc}}^2((0,T)\times\R)$. In combination with the $L^\infty$-bounds, we deduce that this convergence holds 
strongly in $L_{\mathrm{loc}}^p(0,T;L^q(\R))$, for any $2\leq p,q<\infty$, using the dominated convergence theorem. Moreover, the convergence is also true pointwise almost everywhere.
\end{proof}

\section{Incompressible Limit and Complementarity Relation}
\label{sec:Incompressible}
We have garnered all information necessary to pass to the incompressible limit  in the pressure equation \eqref{eq:PressureEqn} and prove Theorem \ref{thm:IncompressibleLimit}.

\begin{proof}[Proof of Theorem~\ref{thm:IncompressibleLimit}]
% Having established strong convergence of the sequences $(n_k)_k$ and $(p_k)_k$ we can pass to the limit in the relation
Having established strong convergence of the sequence $(p_k)_k$, and weak convergence of $(n_k)_k$ due to the a priori estimates, we can pass to the limit in the relation
\begin{equation}
    \label{eq:np}
    n_k p_k = \prt*{\frac{k-1}{k}}^{\frac{1}{k-1}}p_k^{\frac{k}{k-1}},
\end{equation}
to deduce the relation $(1-n_\infty)p_\infty = 0$, almost everywhere.
For a test function $\varphi\in C_c^1((0,T)\times\R)$, let us recall the weak formulation of the equation for the pressure
\begin{align}
    \int_0^T\int_\R\partialt \varphi p_k & - \partialx \varphi p_k  \partialx {W_k}  - \varphi p_k  \frac{\partial^2 {W_k}}{\partial x^2} \dx{x} \dx{t}\\
    &= - \int_0^T\int_\R \frac{k-1}{\nu} \varphi p_k \brk*{W_k - p_k + \nu r_k G\1(p_k) + \nu (1-r_k)G\2(p_k)} \dx{x} \dx{t}.
\end{align}
Due to the uniform bounds on the right-hand side, \cf~Lemma~\ref{lemma:pQinL1}, we may divide by $k-1$ to obtain
\begin{align*}
    0 = \lim_{k\to \infty }\int_0^T\int_\R  \varphi p_k \brk*{W_k - p_k + \nu r_k G\1(p_k) + \nu (1-r_k)G\2(p_k)} \dx{x} \dx{t}.
\end{align*}
Note that, writing $n\1_k = n_k r_k$ and expressing $n_k$ in terms of $p_k$, in a fashion similar to Eq. \eqref{eq:np}, we may readily pass to the limit in all of these terms due to the strong convergence of the pressure and the a priori bounds of Lemma~\ref{lemma:apriori}. We thus obtain
\begin{equation*}
     0 = p_\infty \prt*{W_\infty - p_\infty + \nu n\1_\infty G\1(p_\infty) + \nu n\2_\infty G\2(p_\infty)},
\end{equation*}
in the weak sense, where $n\i_\infty$ satisfies
\begin{equation*}
    \partialt{n\i_\infty} - \partialx{}{\prt*{n\i_\infty\partialx{W_\infty}}} = n\i_\infty G\i(p_\infty),
\end{equation*}
for $i=1,2$. Indeed these equations follow by passing to the limit in the weak formulation of~\eqref{eq:system}
\begin{align*}
    \int_0^T\int_\R \partialt \varphi n\i_k - \partialx \varphi n\i_k  \partialx {W_k}  \dx{x} \dx{t}
    = - \int_0^T\int_\R \varphi n\i_k G\i(p_k) \dx{x} \dx{t},
\end{align*}
where $\varphi\in C_c^1((0,T)\times\R)$.
\end{proof}

\begin{remark}
In fact, using the strategy of the previous section, \ie, the BV-bounds in space, in conjunction with a control on the time derivative obtained from bounding the right-hand side of the equation for the individual species, one can deduce also strong convergence of the sequence $(n_k)_k$. 
As a consequence, the limit functions $n_\infty, n\i_\infty$ are of bounded variation in time and space.
\end{remark}

\section{Numerical Investigations}
\label{sec:Numerics}
In this section, we revisit the results from the preceding sections and showcase certain properties of the system. The numerical simulations are performed using the positivity-preserving upwind finite volume scheme proposed for a system of two interacting species in~\cite{CHS17, CFS18} where the reaction terms are computed on each finite volume cell as simple ODEs. The implementation hinges on the fact that the elliptic Brinkman law~\eqref{eq:brinkman} can be solved using the integral representation~\eqref{eq:SourceSolution}.

\begin{figure}
    \centering
    \subfigure[Initial data, $\nu = 1$.]{
    \includegraphics[width=0.45\textwidth]{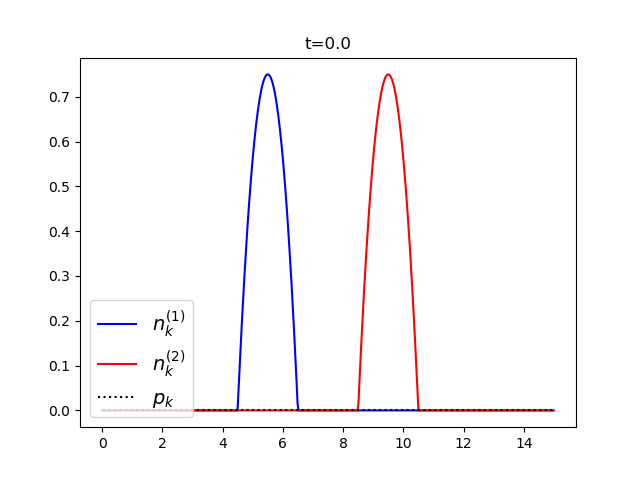}
    }
    \subfigure[Final time, $T=8$.]{
    \includegraphics[width=0.45\textwidth]{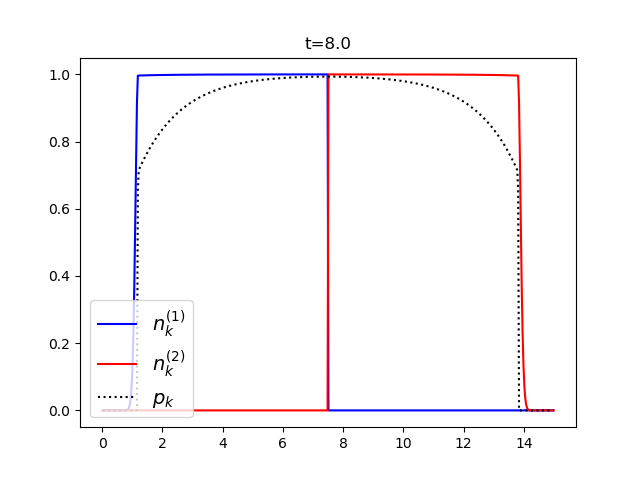}
    \label{fig:fig_1_b}
    }
    \subfigure[Initial data, $\nu=0.01$.]{
    \includegraphics[width=0.45\textwidth]{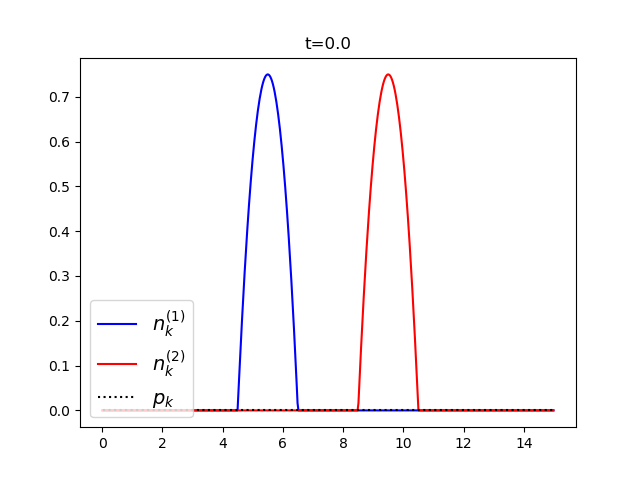}
    }
    \subfigure[Final time, $T=5$.]{
    \includegraphics[width=0.45\textwidth]{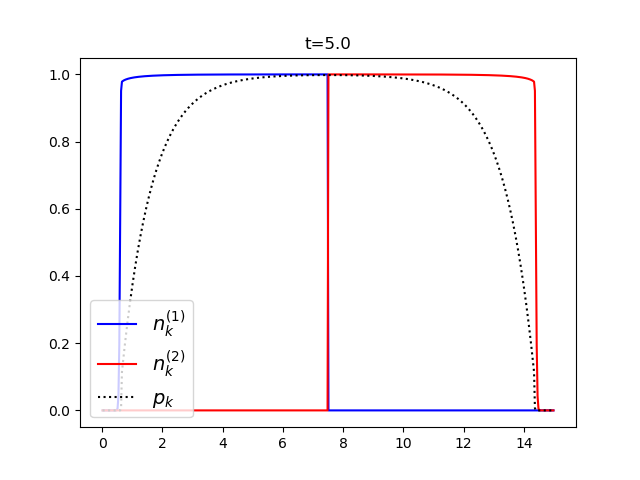}
    \label{fig:fig_1_d}
    }
    \caption{We run the simulation for the same initial data for two different values of $\nu$, \ie, $\nu = 1$ in the upper row and $\nu = 0.01$ in the bottom row. In both cases, we chose $k=100$ since we are interested in the limiting behaviour. The individual species are represented by solid lines in red and blue, the pressure is superimposed as a black dotted line. In the upper row the pressure drops to zero immediately, whereas in the bottom row we can see an almost smooth transition.}
    \label{fig:SimulationDifferentNu}
\end{figure}

Figure \ref{fig:SimulationDifferentNu}  displays the role of the viscosity parameter, $\nu$. The same initial data
\begin{align*}
    n_{k,0}\1 (x) = m (x-4.5)(6.5 - x) , \quad \text{and} \quad n_{k,0}\2 (x) = m(x-8.5)(10.5-x),
\end{align*}
are used in both cases and $m>0$ is chosen to normalise the initial mass to $1$. In both cases we used $k=100$, as we are interested in the incompressible regime. In addition, we chose $G\i(p)= 1-p$, for $i=1,2$, corresponding to a homeostatic pressure of $p_M = 1$, \cf Eq. \eqref{eq:AssumptionsOnG}. In both cases we observe the propagation of segregation in agreement with Lemma \ref{lem:IntegrabilityAndSegregation}. Moreover, we observe a drastic drop in the pressure in Figure \ref{fig:fig_1_b}. This was already observed in the one species case, \cf \cite{PV15}, where the fact was exploited that the limiting pressure has an integral representation formula. In stark contrast, Figure \ref{fig:fig_1_d} shows an almost smooth transition of the pressure indicating a much higher regularity. This is in perfect alignment with the findings of \cite{BPPS19}, as the case $\nu=0$ yields, at least formally, the system studied in the latter. As a matter of fact, the pressure gradient was shown to be  square-integrable in the Darcy case, \ie, $\nu=0$. We conclude, by remarking that the front propagation is much faster in the regime of small $\nu$, another fact that was already observed in the single-species case.

In Figure \ref{fig:NonSymmetricGrowth} we present the effect of different growth terms of the tumour cells and healthy tissue. To be more precise, we choose the same initial condition as above but use
\begin{align*}
    G\1(p) = 2 - p, \qquad \text{and} \qquad G\2(p) = 1 - p,
\end{align*}
as growth terms for the two species. We see that the first species, $n_k\1$, proliferates much faster compared to the second one. More interestingly, we see that the pressure not only has a jump at the boundaries of the support of the total population, but also at the internal layer.
\begin{figure}
    \centering
    \subfigure[Initial data, $t=0$.]{
    \includegraphics[width=0.45\textwidth]{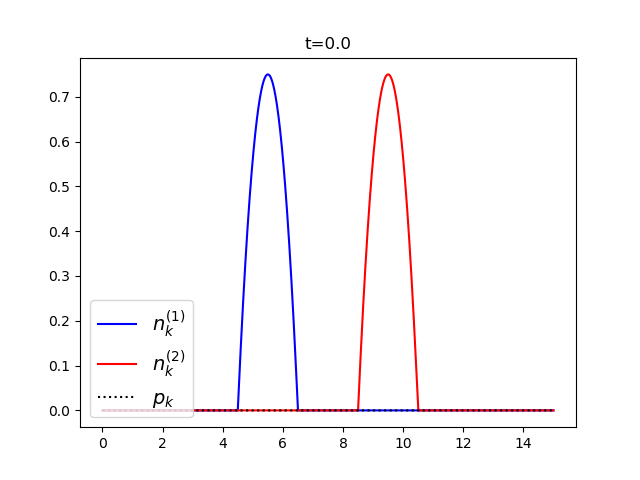}
    }
    \subfigure[Intermediate time, $t=2$.]{
    \includegraphics[width=0.45\textwidth]{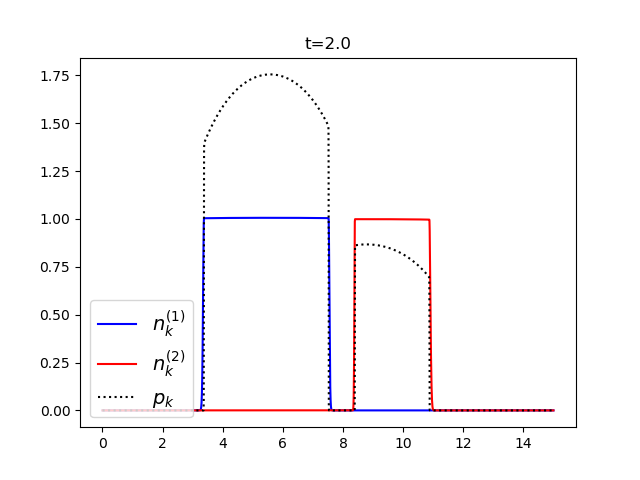}
    \label{fig:fig_2_b}
    }
    \subfigure[Intermediate time, $t=4$.]{
    \includegraphics[width=0.45\textwidth]{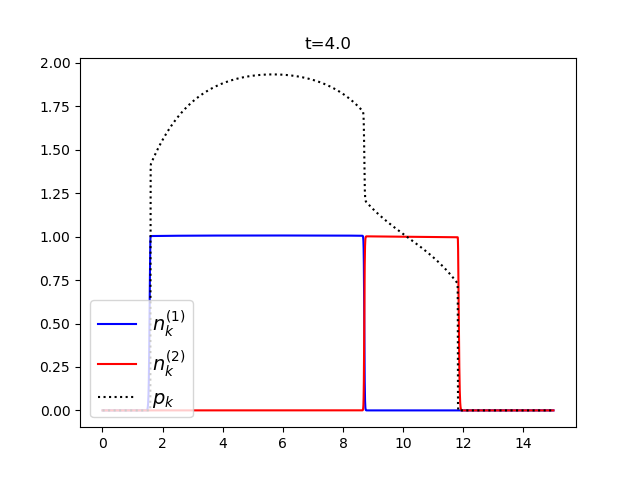}
    \label{fig:fig_2_b}
    }
    \subfigure[Final time, $T=5$.]{
    \includegraphics[width=0.45\textwidth]{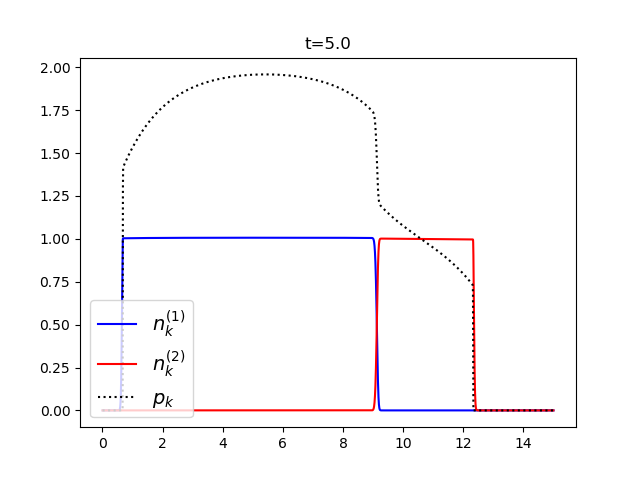}
    \label{fig:fig_2_b}
    }
    \caption{We run the simulation for the same initial data for two different growth functions, $G\i(p)$. In both cases, we chose $k=100$ and the individual species are represented by solid lines in red and blue, the pressure is superimposed as a black dotted line, as before. The pressure drops not only at the boundary of its support. We also observe jumps in internal layers.}
    \label{fig:NonSymmetricGrowth}
\end{figure}

Figure \ref{fig:Invasion} shows the evolution of system \eqref{eq:system} for initial data representing a regime where healthy tissue has already been intruded by cancerous cells, \ie,
\begin{align*}
    n_{k,0}\1 (x) = m (x-6.5)(8.5 - x) , \quad \text{and} \quad n_{k,0}\2 (x) = m(x-6)(9-x),
\end{align*}
where, again, $m>0$ normalises the mass. In addition, we choose the same unequal growth functions, $G\i$, as before, thus promoting the tumour growth compared to the normal tissue. 
\begin{figure}
    \centering
    \subfigure[Initial data, $t=0$.]{
    \includegraphics[width=0.45\textwidth]{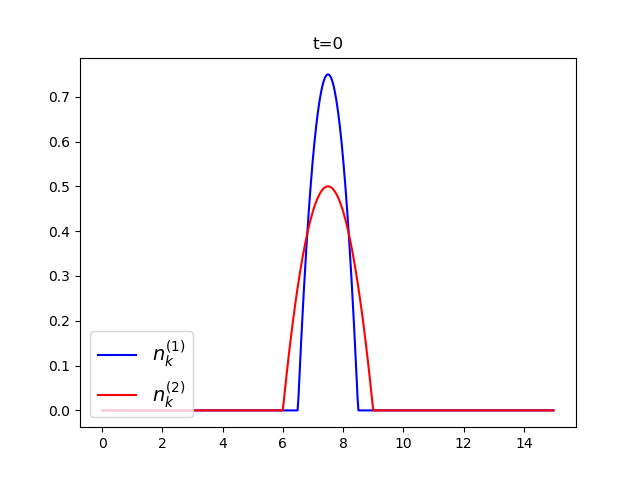}
    }
    \subfigure[Final time, $t=2$.]{
    \includegraphics[width=0.45\textwidth]{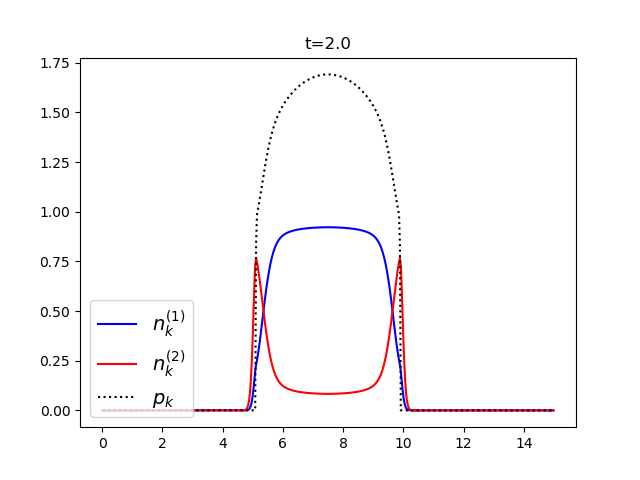}
    }
    \subfigure[Initial data, $t=4$]{
    \includegraphics[width=0.45\textwidth]{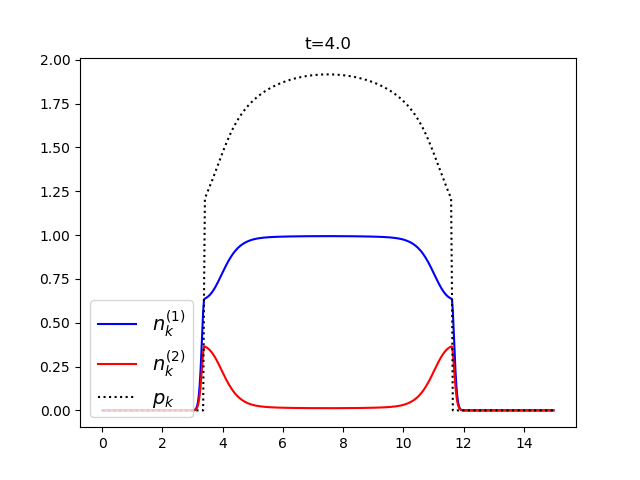}
    }
    \subfigure[Final time, $T=6$.]{
    \includegraphics[width=0.45\textwidth]{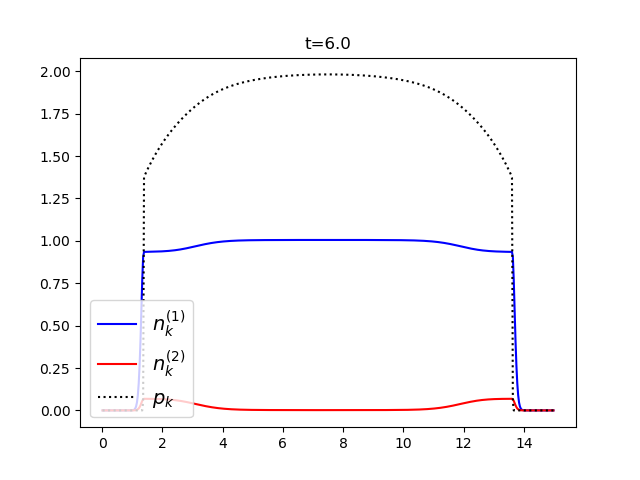}
    \label{fig:sim_5d}
    }
    \caption{The simulation shows the invasion of abnormal cells surrounded by healthy tissue. As time evolves, the tumour spreads and the density of normal cells is diminished and nearly vanishes, \cf Figure \ref{fig:sim_5d}. As before, $\nu=1$ and $k=100$.}
    \label{fig:Invasion}
\end{figure}

\section{Conclusions}
The goal of the paper was twofold. We extended an established tumour growth model to an interaction system of two cell populations, \ie, normal and abnormal cells. The interaction is given through the Brinkman flow, an elliptic equation that yields the velocity fields for each cell population. In the first part of this paper we proved the existence of solutions to the interaction system, \cf~Theorem \ref{thm:Existence}. Building upon this result, we passed to the ``incompressible'' limit in the pressure equation, Eq.~\eqref{eq:PressureEqn}, and obtained the limiting equation, also referred to as \emph{complementarity relation}, \cf~Theorem~\ref{thm:IncompressibleLimit}. This way we were able to derive a geometric model from the cell-density model we presented.

Note that both the existence result and the incompressible limit rely on strategies different from the ones adapted for related models (either in the parabolic two-species case when Brinkman's law is replaced by Darcy's law ($\nu=0$) or the one-species model with Brinkman flow). The results are complemented with a numerical investigation showcasing the segregation result, the discontinuities in the pressure and the two individual population densities which is why we do not expect better regularity than bounded variation. 

In summary, this paper extends known results in the literature to two species. As far as the existence of solutions is concerned, no additional difficulties are expected in the multi-dimensional case. However, when it comes to the stiff limit not only our method fails but also the kinetic reformulation that was employed in the one-species case, \cf \cite{PV15}, would need a serious make-over that is, at this stage, far from clear ---  even in one dimension. New singularities appear at internal layers when the two species meet and it appears different tools are required, such as the extension of the kinetic reformulation to systems, which, to our knowledge, does not exist. The exploration of such a technique is left for future works.

In addition, the rigorous inviscid limit, $\nu\rightarrow 0$, remains an open question that is left for future work.

\newpage
\section*{Appendix}
For the readers' convenience we shall recall here the compactness method invoked in \cite{HHP00} in the context of the fast reaction limit in a  cross-diffusion system with growth and death processes. Roughly speaking, it allows to  identify the limit of the composition of a uniformly compact nonlinear function and a weakly convergent sequence. 
\begin{lemma}[``Lemma A'']
\label{lem:CompactnessHHP}
Let $\Omega\subset \R$ be a compact domain and set $Q_T:=(0,T)\times\Omega$. Furthermore, let $\set{u_n}\subset L^\infty(Q_T)$ and $\set{f_n}\subset C(\R)$ be sequences with the properties
\begin{enumerate}[(i)]
    \item $u_n \rightharpoonup u$, weakly in $L^2(Q_T)$, \label{en:HHL1}
    \item $f_n$ is nondecreasing,\label{en:HHL2}
    \item $f_n \rightarrow f$, uniformly on compact subsets of $\R$, and\label{en:HHL3}
    \item $f_n(u_n) \rightarrow \chi$, strongly in $L^2(Q_T)$.\label{en:HHL4}
\end{enumerate}
Then
\begin{align*}
    \chi = f(u).
\end{align*}
\end{lemma}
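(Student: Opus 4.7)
The statement is a classical Minty-type monotonicity identification, so the plan is to exploit monotonicity of $f_n$ to upgrade the weak-strong pairing into an identification of the nonlinear limit.

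First I would fix once and for all a uniform bound $M>0$ such that $\|u_n\|_{L^\infty(Q_T)} \leq M$ (this is the natural standing assumption in such statements; indeed the $L^\infty$ bound together with weak $L^2$ convergence forces $\|u\|_{L^\infty} \leq M$ as well). Set $I \coloneqq [-M-1, M+1]$. By hypothesis \eqref{en:HHL3}, $f_n \to f$ uniformly on $I$, which in particular makes $f$ continuous and nondecreasing on $I$, and gives $\|f_n - f\|_{L^\infty(I)} \to 0$.

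The heart of the argument is the monotonicity inequality. Pick an arbitrary $v \in L^\infty(Q_T)$ with $\|v\|_\infty \leq M+1$. By \eqref{en:HHL2}, pointwise a.e. in $Q_T$,
\begin{equation*}
    (f_n(u_n) - f_n(v))(u_n - v) \geq 0.
\end{equation*}
Integrating over $Q_T$ and expanding into four bilinear terms, I would pass to the limit $n\to\infty$ in each separately. The diagonal term $\int f_n(u_n) u_n$ converges to $\int \chi u$ by combining \eqref{en:HHL1} (weak $L^2$) with \eqref{en:HHL4} (strong $L^2$). The term $\int f_n(u_n) v$ converges to $\int \chi v$ by strong convergence of $f_n(u_n)$ against the fixed $L^2$ function $v$. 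For the cross terms containing $f_n(v)$, uniform convergence on $I$ together with boundedness of $|Q_T|$ gives strong $L^2$ convergence $f_n(v) \to f(v)$, which pairs with either $u_n \rightharpoonup u$ or the fixed $v$. This yields the variational inequality
\begin{equation*}
    \int_{Q_T} (\chi - f(v))(u - v)\dx x \dx t \geq 0 \qquad \text{for every } v\in L^\infty(Q_T),\ \|v\|_\infty\leq M+1.
\end{equation*}

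Finally I apply the Minty trick: given any $w \in L^\infty(Q_T)$ and $\lambda \in (0,1)$, set $v = u - \lambda w$, which still lies in the admissible ball for $\lambda$ small. Substituting and dividing by $\lambda>0$ yields
\begin{equation*}
    \int_{Q_T} (\chi - f(u - \lambda w))\, w \dx x \dx t \geq 0.
\end{equation*}
Since $u, w \in L^\infty$ and $f$ is continuous on $I$, dominated convergence as $\lambda \to 0^+$ produces $\int_{Q_T}(\chi - f(u)) w \geq 0$ for all $w \in L^\infty(Q_T)$. Replacing $w$ by $-w$ gives equality, and since $L^\infty(Q_T)$ is dense in $L^2(Q_T)$ on the compact set $Q_T$, the fundamental lemma of the calculus of variations forces $\chi = f(u)$ a.e. in $Q_T$.

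The only delicate point I anticipate is making sure the cross term $\int f_n(v) u_n$ passes to $\int f(v) u$: this needs both that $f_n(v) \to f(v)$ in $L^2(Q_T)$ (which uses compactness of $\Omega$, so that the uniform convergence on $I$ upgrades to $L^2$-convergence, and the uniform $L^\infty$-bound on $v$ so that its range stays inside $I$) and the weak $L^2$-convergence of $u_n$. Everything else is bookkeeping around the Minty argument.
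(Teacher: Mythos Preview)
Your Minty-trick argument is correct and is the standard route to this identification. One small remark: the uniform $L^\infty$ bound $\|u_n\|_\infty\leq M$ that you impose is not literally written in the hypotheses as stated, but you are right that it is the intended standing assumption (and it is satisfied in the paper's application to the pressure sequence $(p_k)_k$); without it, hypothesis~(iii) could not be exploited.

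As to comparison: the paper does not give its own proof of this lemma. It is quoted in the Appendix from Hilhorst--van der Hout--Peletier~\cite{HHP00} for the reader's convenience, followed only by a remark checking that the hypotheses are met in the situation at hand. Your argument therefore supplies what the paper omits. The Minty-type approach you outline --- integrating the monotonicity inequality $(f_n(u_n)-f_n(v))(u_n-v)\ge0$, passing to the limit using the weak--strong pairing, and then localising with $v=u-\lambda w$ --- is precisely the classical proof one finds in the literature for this kind of statement, and your handling of the cross term $\int f_n(v)\,u_n$ (uniform convergence of $f_n$ on the fixed compact range $I$ giving strong $L^2$ convergence of $f_n(v)$, then pairing with $u_n\rightharpoonup u$) is the right justification.
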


For the sake of exposition, we recall here that the assumptions of the above lemma are indeed met in our case.
\begin{remark}[The assumptions are met]
The first assumption is the easiest to check as it follows directly from the uniform $L^\infty$-bounds on the pressure. Similarly, it is readily verified that each element of the sequence of functions, in our context given by $\phi_k(x) = x^{k/k-1}$, is indeed nondecreasing. Moreover, the uniform convergence towards the identity is straightforward. Thus the only requirement that needs a more minute argument is (\ref{en:HHL4}) which we present in the first part of the proof of Lemma \ref{lem:StrongCompactnessPressure}.
\end{remark}

\section*{Acknowledgements}
The authors are grateful to Beno\^it Perthame and Nicolas Vauchelet for suggesting this problem and delightful discussions.
This work was completed while T.D.\ was a visitor at Laboratoire Jacques-Louis Lions, whose kind hospitality he appreciates. 
T.D.\ recognises the support of the Polish National Agency for Academic Exchange (NAWA) and National Science Center (Poland), grant no 2018/30/M/ST1/00423. M.S. fondly acknowledges the support of the Fondation Sciences Mathématiques de Paris (FSMP) for the postdoctoral fellowship.

\end{document}